\documentclass[a4paper,10pt]{amsart}

\usepackage{amsmath,amssymb}
\usepackage{amsthm}
\usepackage{enumitem}
\usepackage{mathtools}
\usepackage[all]{xy}

%
\theoremstyle{plain}
\newtheorem{theorem}{Theorem}[section]
\newtheorem{proposition}[theorem]{Proposition}
\newtheorem{lemma}[theorem]{Lemma}
\newtheorem{corollary}[theorem]{Corollary}

\theoremstyle{definition}
\newtheorem{definition}[theorem]{Definition}
\newtheorem*{acknowledgements}{Acknowledgements}

\newtheorem{example}[theorem]{Example}

\theoremstyle{remark}
\newtheorem{remark}[theorem]{Remark}

\newtheorem{convention}[theorem]{Convention}

\newtheorem{question}[theorem]{Question}

\newtheorem{step}{Step}

\numberwithin{equation}{theorem}


\DeclareMathOperator{\NE}{NE}
\DeclareMathOperator{\Proj}{Proj}
\DeclareMathOperator{\pr}{pr}

\DeclareMathOperator{\Gr}{Gr}
\DeclareMathOperator{\rank}{rank}

\DeclareMathOperator{\codim}{codim}

\DeclareMathOperator{\id}{id}
\DeclareMathOperator{\Chow}{Chow}
\DeclareMathOperator{\Fl}{Fl}

\DeclareMathOperator{\SFl}{SFl}
\DeclareMathOperator{\OG}{OG}
\DeclareMathOperator{\SG}{SG}
\DeclareMathOperator{\im}{Im}

\newcommand{\cNE}{{\overline{\NE}}}

\newcommand{\nequiv}{\equiv _\mathrm{num}}

\newcommand{\tX}{\widetilde X}

\newcommand{\te}{\widetilde e}

\newcommand{\tF}{\widetilde F}

\newcommand{\tS}{\widetilde{S}}
\newcommand{\tU}{\widetilde{U}}
\newcommand{\tpi}{\widetilde{\pi}}
\newcommand{\blank}{{-}}
\newcommand{\dto}{\dashrightarrow}

\newcommand{\sE}{\mathcal{E}}
\newcommand{\sF}{\mathcal{F}}
\newcommand{\sG}{\mathcal{G}}

\newcommand{\sC}{\mathcal{C}}

\newcommand{\sN}{\mathcal{N}}

\newcommand{\cO}{\mathcal{O}}

%
\newcommand{\bC}{\mathbf{C}}

\newcommand{\bP}{\mathbf{P}}
\newcommand{\bQ}{\mathbf{Q}}
\newcommand{\bR}{\mathbf{R}}
\newcommand{\bZ}{\mathbf{Z}}
\newcommand{\bO}{\mathbf{O}}

\newcommand{\dPA}{
{\SelectTips{}{12}
\objectmargin={0pt}
\objectheight={20pt}
\objectwidth={5pt}
\xygraph{!{<0cm,0cm>;<0.7cm,0cm>:<0cm,0.7cm>::}
\bullet *=!D{1} -[r] \circ*=!D{2}-@{-}|{\cdots}[r]  \circ -[r] \circ *=!D{r-1}
}}
}
\newcommand{\dPC}{
{\SelectTips{}{12}
\objectmargin={0pt}
\objectheight={20pt}
\objectwidth={5pt}
\xygraph{!{<0cm,0cm>;<0.7cm,0cm>:<0cm,0.7cm>::}
\bullet*=!D{1} -[r] \circ*=!D{2}-@{-}|{\cdots}[r]  \circ -@2{-}|@{<}[r] \circ*=!D{r/2}
}}
}
\newcommand{\dAA}{
{\SelectTips{}{12}
\objectmargin={0pt}
\objectheight={20pt}
\objectwidth={5pt}
\xygraph{!{<0cm,0cm>;<0.7cm,0cm>:<0cm,0.7cm>::}
\bullet*=!D{1} -[r] \circ*=!D{2}-@{-}|{\cdots}[r]  \circ-[r] \circ*=!D{r-1} -@0{-}[r] \bullet*=!D{1} -[r] \circ*=!D{2}-|{\cdots}[r] \circ-[r] \circ*=!D{r-1} 
}}
}

\newcommand{\dAM}{
{\SelectTips{}{12}
\objectmargin={0pt}
\objectheight={20pt}
\objectwidth={5pt}
\xygraph{!{<0cm,0cm>;<0.7cm,0cm>:<0cm,0.7cm>::}
\bullet*=!D{1} -[r] \circ*=!D{2}-|{\cdots}[r] \circ -[r] \bullet*=!D{r}
}}
}

\newcommand{\dAG}{
{\SelectTips{}{12}
\objectmargin={0pt}
\objectheight={20pt}
\objectwidth={5pt}
\xygraph{!{<0cm,0cm>;<0.7cm,0cm>:<0cm,0.7cm>::}
\circ*=!D{1}-|{\cdots}[r]  \circ -[r] \bullet*=!D{r-1}-[r] \bullet*=!D{r} -[r] \circ -|{\cdots}[r]  \circ*=!D{2r-2}
}}
}

\newcommand{\dC}{
{\SelectTips{}{12}
\objectmargin={0pt}
\objectheight={30pt}
\objectwidth={5pt}
\xygraph{!{<0cm,0cm>;<0.7cm,0cm>:<0cm,0.7cm>::}
\circ*=!D{1}-|{\cdots}[r] \circ -[r] \bullet*=!D{r-1}-[r] \bullet*=!D{r} -[r] \circ -|{\cdots}[r]  \circ-@2{-}|(.6)@{<}[r] \circ*=!D{\dfrac{3r}{2}-1}
}}
}

\newcommand{\dD}{
{\SelectTips{}{12}
\objectmargin={0pt}
\objectheight={20pt}
\objectwidth={5pt}
\xygraph{!{<0cm,0cm>;<0.7cm,0cm>:<0cm,0.7cm>::}
\circ*=!D{1}-|{\cdots}[r] \circ -[r] \circ(-[]!{+(.7,.5)}\bullet*=!D{r-1}) -[]!{+(.7,-.5)} \bullet*=!D{r}
}}
}

\newcommand{\dF}{
{\SelectTips{}{12}
\objectmargin={0pt}
\objectheight={20pt}
\objectwidth={5pt}
\xygraph{!{<0cm,0cm>;<0.7cm,0cm>:<0cm,0.7cm>::}
\circ-[r] \bullet -@2{-}|(.6)@{>}[r] \bullet-[r] \circ 
}}
}

\newcommand{\dG}{
{\SelectTips{}{12}
\objectmargin={0pt}
\objectheight={20pt}
\objectwidth={5pt}
\xygraph{!{<0cm,0cm>;<0.7cm,0cm>:<0cm,0.7cm>::}
\bullet-@3{-}|(.6)@{>}[r] \bullet 
}}
}

\AtBeginDocument{%
\def\MR#1{}
}

\setenumerate{label=(\arabic*),
font=\normalfont
}

\title{Mukai pairs and simple $K$-equivalence}
\author[A. KANEMITSU]{Akihiro KANEMITSU}
\date{\today}
\address{Research Institute for Mathematical Sciences,
Kyoto University, Kyoto 606-8502, Japan}
\email{kanemitu@kurims.kyoto-u.ac.jp}
\thanks{The author is a JSPS Research Fellow and he is supported by the Grant-in-Aid for JSPS fellows (JSPS KAKENHI Grant Number 18J00681).}
\subjclass[2010]{14E05,14E30,14J45,14J60}
\keywords{Fano manifold, vector bundle, Mukai pair, flop, $K$-equivalence}

\begin{document}

\begin{abstract}
A $K$-equivalent map between two smooth projective varieties is called simple if the map is resolved in both sides by single smooth blow-ups.
In this paper, we will provide a structure theorem of simple $K$-equivalent maps, which reduces the study of such maps to that of special Fano manifolds.
As applications of the structure theorem, we provide examples of simple $K$-equivalent maps, and classify such maps in several cases, including the case of dimension at most $8$.
\end{abstract}

\maketitle

\section*{Introduction}
A \emph{$K$-equivalent map} between two smooth projective varieties $X_1$ and $X_2$ is, by definition, a birational map $\chi \colon X_1 \dto X_2$ that  admits a resolution of indeterminacy
\[
\xymatrix{
& \tX \ar[ld]_-{f_1} \ar[rd]^-{f_2}& \\
X_1 \ar@{-->}[rr]^{\chi}& & X_2
}
\]
by a smooth projective variety $\tX$ with the condition $f_1^{*}K_{X_1} = f_2^*K_{X_2}$.
Such birational maps appear in several important situations of birational geometry of algebraic varieties;
for example, flops are $K$-equivalent birational maps, and any two birational minimal varieties are $K$-equivalent.
Also, it is checked or conjectured that $K$-equivalence preserves many invariants of algebraic varieties;
for example, Kawamata's $DK$-hypothesis predicts that $K$-equivalence of two algebraic varieties implies their $D$-equivalence, i.e.\ their derived categories of coherent sheaves are equivalent \cite{Kaw02}.

In this paper, we will focus on a class of $K$-equivalent birational maps, called simple $K$-equivalent maps.
A $K$-equivalent map is called \emph{simple}, if we can choose a resolution as above such that $f_i$ are smooth blow-ups \cite{Li18}.
At a first glance, the assumption in this definition seems to be too strong.
However, this class is very interesting because it includes some important birational maps such as \emph{standard flops} and \emph{Mukai flops}, and it provides nice examples for testing several conjectures on $K$-equivalent birational maps.
For example, $D$-equivalence for standard flops and Mukai flops are proved in \cite{BO95,Kaw02,Nam03}.
Also, in \cite{Seg16}, it is proved that (in a local setting) a simple $K$-equivalent map in dimension $5$, called \emph{Abuaf's flop}, induces $D$-equivalence (cf.\ \cite{Har17}).
A similar statement for a $7$-dimensional flop is also obtained by Ueda \cite{Ued18}.

Based on the above interesting phenomena, it is natural to wonder further examples of simple $K$-equivalent birational maps, and try to classify these birational maps.
Such an attempt is started by \cite{Li18}, and it is proved that simple $K$-equivalent maps in dimension at most $5$ are only three types; standard flops, Mukai flops and Abuaf's flop.
Also it is desirable to have a nice structure theorem for simple $K$-equivalent maps.
In the present paper, we go further in this direction.
More precisely, the purposes of this paper are
\begin{enumerate}
 \item to give a structure theorem of simple $K$-equivalent maps, which relates such maps to a special kind of Fano manifolds, which we call \emph{roofs};
 \item to provide applications of the structure theorem. More precisely, we provide examples of $K$-equivalent birational maps and classify such maps in several cases.
\end{enumerate}

\subsection{Results}
In order to state the structure theorem, we introduce some notions:
\begin{definition}[Mukai pairs and roofs]\label{def:mukai}
\hfill
\begin{enumerate}
\item \cite{Muk88} A \emph{Mukai pair} $(V,\sE)$ of dimension $n$ and rank $r$ is a pair of a Fano $n$-fold $V$ and an ample vector bundle $\sE$ of rank $r$ which satisfies $c_1(V) = c_1 (\sE)$.
\item A Mukai pair of rank $r$ is called \emph{simple} if the Picard number of $V$ is one, and the projectivization $\bP(\sE)$ admits another $\bP^{r-1}$-bundle structure.
\item A \emph{roof of $\bP^{r-1}$-bundles} is a Fano manifold $W$ that is isomorphic to the projectivization of a simple Mukai pair with rank $r$.
\end{enumerate}
\end{definition}
Later we will see that a Fano manifold $W$ is a roof of $\bP^{r-1}$-bundles if and only if the following three conditions are satisfied (see Proposition~\ref{prop:roof} for several characterizations of roofs):
\begin{enumerate}
 \item The Picard number of $W$ is two.
 \item $W$ admits two (different) $\bP^{r-1}$-bundle structures.
 \item The index of $W$ is $r$, i.e.\ $-K_W = r H_W$ for some Cartier divisor $H_W$.
\end{enumerate}

Now we can state the structure theorem of simple $K$-equivalent maps.
Let $\chi \colon X_1 \dto X_2$ be a simple $K$-equivalent map between two smooth projective varieties, and let the following diagram
\begin{equation}\label{diagram:simple}
 \xymatrix{
 & & E \ar@{^{(}->}[d] \ar[lldd]_-{g_1} \ar[rrdd]^-{g_2} & & \\
 & & \tX \ar[ld]_-{f_1} \ar[rd]^-{f_2} & & \\
 Y_1 \ar@{^{(}->}[r] & X_1 \ar@{-->}[rr]^{\chi} & & X_2 & Y_2\ar@{_{(}->}[l]
}
\end{equation}
be its resolution by two smooth blow-ups along $Y_1$ and $Y_2$.
We always assume that $\chi$ is not an isomorphism.
Note that by \cite[Lemma~2.1]{Li18} (see Lemma~\ref{prop:excdiv}) the exceptional divisors of $f_1$ and $f_2$ coincide, which we denoted by $E$, and that $\dim Y_1 = \dim Y_2$.
In the following, we will denote by $r$ the codimension of $Y_i$ in $X_i$ and by $\sC_{Y_i/X_i}$ the conormal bundle of $Y_i$ in $X_i$.
Thus $\dim X_1 = \dim X_2 =\dim Y_1+r = \dim Y_2 + r$ and $E \simeq \bP (\sC_{Y_i/X_i})$.

\begin{theorem}[Structure theorem]\label{thm:simple}
Let $\chi \colon X_1 \dto X_2$ be a simple $K$-equivalent map between two smooth projective varieties and let the notation be as above.
Then there exist a smooth projective manifold $M$ and the following commutative diagram
\begin{equation}\label{diagram:str}
 \xymatrix{
 & & E \ar@{^{(}->}[d] \ar[lldd]_-{g_1} \ar[rrdd]^-{g_2} & & \\
 & & \tX \ar[ld]_-{f_1} \ar[rd]^-{f_2} & &\\
 Y_1 \ar@{^{(}->}[r] \ar[rrdd]_-{h_1} & X_1 \ar@{-->}[rr]^{\chi} & & X_2 & Y_2\ar@{_{(}->}[l] \ar[lldd]^-{h_2} \\
 & & & & \\
 & & M, & &
}
\end{equation}
which satisfy the following conditions:
\begin{enumerate}
 \item $h_i$ ($i=1$, $2$) are smooth extremal contractions.
 \item For each $h_i$-fiber $F_i$, the pair $(F_i, \sC_{Y_i/X_i}|_{F_i})$ is a simple Mukai pair.
 \item Each $\psi$-fiber is a roof of $\bP^{r-1}$-bundles, where $\psi \coloneqq h_i \circ g_i$.
\end{enumerate}
\end{theorem}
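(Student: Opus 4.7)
The plan is to exploit the double $\bP^{r-1}$-bundle structure on $E$ given by $g_1, g_2$ to produce a common base $M$ and descend the diagram. As preliminary input, let $\ell_i$ denote a fiber of $g_i$. Since $\cO_E(E) = \cO_{g_i}(-1)$, the blow-up formula $K_{\tX} = f_i^* K_{X_i} + (r-1)E$ yields $K_{\tX} \cdot \ell_i = -(r-1)$ and $E \cdot \ell_i = -1$. Restricting the $K$-equivalence $f_1^* K_{X_1} = f_2^* K_{X_2}$ to $E$ via $f_i|_E = g_i$ gives the key identity
\[
 g_1^*(K_{X_1}|_{Y_1}) = g_2^*(K_{X_2}|_{Y_2}) \quad \text{in } \Pic(E).
\]

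Next I would construct the extremal contractions $h_i \colon Y_i \to M_i$. By the Picard identity and the projection formula, the curve classes $g_{i*}[\ell_j] \in N_1(Y_i)$ (for $\{i,j\}=\{1,2\}$) are $(K_{X_i}|_{Y_i})$-trivial. Standard Mori-theoretic arguments --- bend-and-break on $\tX$ using the bound $-K_{\tX} \cdot \ell_j = r-1$, transported through $g_i$ --- would show the corresponding family of rational curves in $Y_i$ is unsplit, producing an extremal ray of $\cNE(Y_i)$ whose contraction $h_i$ has connected fibers. To identify $M_1 \simeq M_2 =: M$, observe that $g_1^{-1}(h_1^{-1}(m)) \subset E$ is saturated under the $g_2$-fibration (since $g_1$-images of $g_2$-fibers lie in $h_1$-fibers), so equals $g_2^{-1}(S)$ for some closed $S \subset Y_2$; by symmetry $S$ is an $h_2$-fiber, and Stein factorization canonically identifies the two contractions, defining $\psi \coloneqq h_i \circ g_i$.

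For smoothness of $h_i$ and properties (2), (3), I would analyze a single $\psi$-fiber $W \coloneqq \psi^{-1}(m)$. The restrictions $g_i|_W \colon W \to F_i \coloneqq h_i^{-1}(m)$ are $\bP^{r-1}$-bundles with common tautological class $\xi \coloneqq -E|_E$. Restricting the Picard identity to $W$ and applying adjunction on $F_i \subset Y_i \subset X_i$ --- noting $K_{Y_i}|_{F_i} = K_{F_i}$ (smoothness of $h_i$) and $K_{X_i}|_{F_i}$ numerically trivial ($F_i$ lies in the $K_{X_i}|_{Y_i}$-trivial face) --- gives $-K_{F_i} = c_1(\sC_{Y_i/X_i}|_{F_i})$. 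Ampleness of $\sC_{Y_i/X_i}|_{F_i}$ follows from $\xi|_W$ being relatively ample for $g_i|_W$ combined with $F_i$ being Fano, and Picard rank one of $F_i$ follows from elementarity of $h_i$ with $\rho(W) = 2$ (double $\bP^{r-1}$-bundle with distinct bases). Hence $(F_i, \sC_{Y_i/X_i}|_{F_i})$ is a simple Mukai pair and $W$ is the associated roof.

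The main obstacle is producing $h_i$ as a \emph{smooth} extremal contraction: showing $\{g_i(\ell_j)\}$ is unsplit via bend-and-break, and that the resulting contraction has smooth Fano fibers of Picard rank one. Smoothness of $h_i$ and property (2) are mutually reinforcing through the roof structure on $\psi$-fibers, so must be established in parallel rather than sequentially.
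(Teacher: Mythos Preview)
Your outline diverges substantially from the paper's argument, and the point you flag as ``the main obstacle'' is exactly where the proof you sketch is incomplete.

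\textbf{Where the paper differs.} The paper does \emph{not} try to produce $h_i$ by finding an unsplit family in $Y_i$ and contracting an extremal ray directly. Instead it works entirely on $E$: the two $\bP^{r-1}$-bundle maps $g_1,g_2$ are fed into a general machine (Section~3) for varieties carrying two smooth projective-bundle fibrations. First a Chow-variety argument (Theorem~\ref{thm:quotient}) shows that the equivalence relation generated by $g_1$- and $g_2$-fibers has a \emph{projective morphism} as quotient, $E\to M$, equidimensional with irreducible fibers. Then a smoothness criterion of DPS/Sol\'a~Conde--Wi\'sniewski type (Theorem~\ref{thm:smooth}), using only that the pulled-back tangent bundle is relatively nef, forces this quotient to be \emph{smooth}. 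This is Corollary~\ref{cor:face}: the rays of $g_1,g_2$ span a $2$-dimensional extremal face of $\cNE(E)$ whose contraction $\psi\colon E\to M$ is smooth with $\rho=2$ Fano fibers. The maps $h_i$ then drop out by rigidity, and the Mukai-pair condition is verified \emph{afterwards} (Section~\ref{sect:BB}) once smoothness is in hand.

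\textbf{Why your route stalls.} Your bend-and-break step is not justified: the bound $-K_{\tX}\cdot\ell_j=r-1$ lives on $\tX$, but the curves $g_{i*}\ell_j$ live in $Y_i$, and their $(-K_{Y_i})$-degree is not controlled by that number (indeed $-K_{Y_i}\cdot g_{i*}\ell_j$ can be arbitrarily large, cf.\ the type $C_{\frac{3r}{2}-1}$ or $G_2$ roofs). So ``unsplit in $Y_i$'' does not follow, and even granting unsplitness, an unsplit family need not span an extremal \emph{ray} nor yield a regular contraction. More seriously, you correctly identify smoothness of $h_i$ as the crux, but your proposed resolution --- bootstrapping it from the roof structure on $\psi$-fibers --- is circular: you need smoothness of $h_i$ to know that $F_i$ is smooth and that $K_{Y_i}|_{F_i}=K_{F_i}$. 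The paper breaks this circularity by proving smoothness \emph{first}, via the nef-tangent-bundle criterion, with no reference to Mukai pairs; property~(2) is then a consequence, not an input. So smoothness and the Mukai-pair condition are not ``established in parallel'' but strictly in that order.
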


Roughly speaking, the theorem says that a simple $K$-equivalent map is a family of more simpler maps induced from simple Mukai pairs. This theorem is proved in Section~\ref{sect:BB} after the preparation in Section~\ref{sect:twoPB}.

Conversely, in Section~\ref{sect:const}, we will explain how we can construct simple $K$-equivalent maps from simple Mukai pairs.
More generally, we will construct a simple $K$-equivalent map $X \dto X^{+}$ to a complex manifold $X^+$ (which may not be projective in general) from the following given data:
\begin{enumerate}
 \item $X$ is a smooth projective variety, and $Y \subset X$ is a smooth closed subvariety of $X$.
 \item $Y$ admits a smooth extremal contraction $h \colon Y \to M$.
 \item Each $h$-fiber $F$ is a Fano manifold with Picard number one, and the pair $(F,\sC_{Y/X}|_F)$ is a simple Mukai pair.
\end{enumerate}
This construction follows \cite[Section 3]{Muk84}.
Also we will construct the local model of simple $K$-equivalent map from a simple Mukai pair (cf.\ \cite[Section~1]{Nam03}).
Therefore, the study of simple $K$-equivalence is (locally) equivalent to that of simple Mukai pairs.

Then, in Section~\ref{sect:ex}, we will construct several simple $K$-equivalent maps by using the inverse construction.
More precisely, we will construct eight types of such maps, which we will denote by type $A_{r-1} \times A_{r-1}$, $A_r^M$, $A_{2r-2}^G$, $C_{\frac{3r}{2}-1}$ ($r$ even), $D_r$, $F_4$ ($r=3$), $G_2$ ($r=2$) and $G_2^{\dagger}$ ($r=3$) respectively.
All of these examples are deeply related to semi-simple algebraic groups.
Indeed, the corresponding roofs are all homogeneous, with one exception of type $G_2^{\dagger}$.
Also, this exception, the roof of type $G_2^{\dagger}$ has its origin to the geometry of the Cayley octonions and admits the action of the exceptional group of type $G_2$.
In that section, we also collect partial classification results of roofs, which are consequences of the classification of Mukai pairs with large ranks
\cite{Fuj92,Pet90,Pet91,YZ90,Wis89b,PSW92b,NO07,Kan17Mukai,Kan18Mukai} (cf.\ \cite{Occ05})
and the classification of Fano manifolds with Picard rank two whose extremal contractions are $\bP^1$-bundles \cite{MOS14a,Wat14b}.
Then, by combining these classification results with the structure theorem, we will prove the following theorem:
\begin{theorem}[= Corollary~\ref{cor:class}]\label{thm:dim8}
Let $\chi \colon X_1 \dto X_2$ be a simple $K$-equivalent map in codimension $r$, and let the notation be as in Theorem~\ref{thm:simple}.
Assume one of the following conditions:
\begin{enumerate}
 \item $r \geq \dim Y_i - \dim M -2$.
 \item $r = 2$.
 \item $\dim X_i \leq 8$.
\end{enumerate}
Then $\chi$ is one of the above eight types.
\end{theorem}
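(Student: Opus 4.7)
The plan is to exploit the structure theorem (Theorem~\ref{thm:simple}) to reduce the classification of simple $K$-equivalent maps to a classification of simple Mukai pairs (equivalently, roofs) appearing on the fibres of $h_i$. Fix $i \in \{1,2\}$ and a fibre $F$ of $h_i$; set $d := \dim F = \dim Y_i - \dim M$, so $(F, \sC_{Y_i/X_i}|_F)$ is a simple Mukai pair of rank $r$ and base dimension $d$, and the associated $\psi$-fibre is the roof $W := \bP(\sC_{Y_i/X_i}|_F)$, of dimension $d + r - 1$ and Picard number two. The whole classification is then driven by which roof $W$ can occur under each of the three hypotheses.

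For hypothesis (1), $r \geq d - 2$, the pair $(F,\sC_{Y_i/X_i}|_F)$ is a simple Mukai pair whose rank is at least $\dim F - 2$. Here I would invoke the classification of Mukai pairs $(V,\sE)$ with $\rank \sE$ close to $\dim V$ in the cited sequence of works \cite{Fuj92,Pet90,Pet91,YZ90,Wis89b,PSW92b,NO07,Kan17Mukai,Kan18Mukai}; selecting from those lists only the pairs that are simple in the sense of Definition~\ref{def:mukai}, I would check that the resulting roofs coincide precisely with those arising from the eight types listed in Section~\ref{sect:ex}. For hypothesis (2), $r = 2$, the roof $W$ is a Fano manifold of Picard number two whose two extremal contractions are both $\bP^1$-bundles; the classification of such manifolds by Mu\~noz--Occhetta--Sol\'a~Conde \cite{MOS14a} and Watanabe \cite{Wat14b} gives a short list, and I would match the entries against the $r = 2$ instances of the eight types (namely the $G_2$-roof together with the rank $2$ specializations of $A_1 \times A_1$, $A_2^M$, $A_2^G$, $D_2$). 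For hypothesis (3), $\dim X_i \leq 8$, one has $\dim W = \dim X_i - \dim M - 1 \leq 7$, and a case split on $r$ and $d$ — together with the Kobayashi--Ochiai bound forcing $d \geq r - 1$ since $-K_F \equiv r H$ is positive — keeps only finitely many numerical possibilities, each of which is handled by (1) or (2).

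Having identified the fibre $(F,\sC_{Y_i/X_i}|_F)$ as one of the eight simple Mukai pairs, I would globalize as follows: the smooth extremal contraction $h_i \colon Y_i \to M$ is a fibration whose general fibre has Picard number one and whose fibrewise data determines the normal bundle $\sN_{Y_i/X_i}$ up to twist on the base (the conormal is fibrewise the bundle of the simple Mukai pair, which is rigid); applying the inverse construction of Section~\ref{sect:const} to $h_i$ and $\sC_{Y_i/X_i}$ recovers $\chi$ uniquely, and by construction it is of the corresponding type. The equality of the fibrewise type for $i=1$ and $i=2$ is automatic from the structure theorem, since both fibres sit inside the same roof $W$ in the $\psi$-fibration.

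The main obstacle I expect is the case analysis under hypothesis (3), especially when $r$ and $d$ are small: there the classification lists for Mukai pairs contain several pairs that are not simple, and one must rule out, for each candidate roof $W$ of dimension at most $7$, spurious $\bP^{r-1}$-bundle structures that do not extend globally to a simple $K$-equivalent map. A secondary subtlety is the rank $2$ classification: the MOS--Watanabe list contains Fano manifolds that a priori look like roof candidates (for instance certain $\bP^1$-bundles over rational homogeneous spaces) but whose second $\bP^1$-bundle structure fails to produce an index $2$ polarization $-K_W = 2H_W$; these have to be excluded by computing $c_1$ against the two extremal rays. Once these verifications are carried out fibre by fibre, the globalization paragraph above is essentially formal.
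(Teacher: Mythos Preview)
Your approach is exactly the paper's: apply Theorem~\ref{thm:simple} to reduce to the $\psi$-fibres, which are roofs $W$ of $\bP^{r-1}$-bundles with $\dim W = d+r-1$ where $d = \dim Y_i - \dim M$, and then invoke the cited classifications of Mukai pairs with $r \geq d-2$ (for (1)), the Mu\~noz--Occhetta--Sol\'a~Conde/Watanabe list (for (2)), and the numerical reduction of (3) to (1) and (2). That reduction is correct: if $r \geq 3$ and $\dim W \leq 7$, then $d \leq 8-r \leq r+2$, so (1) applies.

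Two corrections, though. First, your entire ``globalization'' paragraph is unnecessary and based on a misreading of the definitions. In the paper, a simple $K$-equivalent map is \emph{of type $T$} precisely when each $\psi$-fibre is isomorphic to the roof of type $T$ (see the definitions in Examples~\ref{ex:Mukai}--\ref{ex:G*}). There is no rigidity statement to prove, no uniqueness of $\chi$ to establish, and no appeal to Section~\ref{sect:const}: once the roof $W$ is identified fibrewise, the conclusion holds by definition. Your worry about ``spurious $\bP^{r-1}$-bundle structures that do not extend globally'' is likewise misplaced, and the claim that the conormal bundle is ``rigid'' and determined ``up to twist on the base'' is neither needed nor justified.

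Second, a couple of small slips: for $r=2$ the types $A_2^G$ and $D_2$ do not exist (the paper defines $A_{2r-2}^G$ only for $r \geq 3$ and $D_r$ only for $r \geq 4$); the correct $r=2$ list is $A_1 \times A_1$, $A_2^M$, $C_2$, $G_2$. Also, the justification ``$-K_F \equiv rH$'' for the bound $d \geq r-1$ is wrong: the index of $F$ need not equal $r$ (e.g.\ type $G_2$ has $r=2$ but index $3$ or $5$). The bound $r \leq d+1$ does hold, but it comes from the Mukai pair classification itself (an ample rank-$r$ bundle $\sE$ with $c_1(\sE)=c_1(F)$ forces $r \leq \dim F + 1$), not from Kobayashi--Ochiai applied to $F$.
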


\begin{remark}
 As mentioned above, we will construct eight examples of simple $K$-equivalent maps in Section~\ref{sect:ex}.
Some of these examples are classical or well-known: $K$-equivalent maps of type $A_{r-1} \times A_{r-1}$ and $A_r^M$ are standard flops and Mukai flops respectively.
Abuaf's flop in \cite{Seg16} is of type $C_2$, and the $7$-dimensional flop discussed in \cite{Ued18} is of type $G_2$.
Also, though this author could not find them in the literature, some of the other examples seem to be known to the experts;
for example, in response to the earlier version of this paper, Doctor Duo Li informed this author that he also realized the idea to construct simple $K$-equivalence from homogeneous varieties, and Hua-Zhong Ke had an idea to relativize Abuaf's flop.
\end{remark}

In the last section (=Section~\ref{sect:symp}), as an application of Theorem~\ref{thm:dim8}, we will provide an answer to a question of Daniel Huybrechts on simple $K$-equivalence on symplectic varieties.

\begin{convention}
We will work over the complex number field $\bC$.
A \emph{smooth $\bP^{r-1}$-fibration} means a smooth projective morphism whose fibers are projective spaces $\bP^{r-1}$, while a \emph{$\bP^{r-1}$-bundle} means the projection of a projectivized vector bundle.
For a vector bundle $\sE$ on a variety $V$, we will denote by $\bP(\sE)$ the projectivization $\Proj (S (\sE))$ in the sense of Grothendieck.
\end{convention}

\begin{acknowledgements}
The author wishes to express his gratitude to Professor Yoichi Miyaoka for suggesting me the relation between Mukai pairs and flops, and to Professor Shigeru Mukai for drawing the authors's attention to \cite{Li18} and for his insightful comments and discussions.
The author is also grateful to Professor Yuki Hirano for his helpful comments and telling me about the paper \cite{Ued18}, and to Doctor Sho Ejiri for helpful discussions on a variant of Theorem~\ref{thm:quotient}.
He is also grateful to Doctor Wahei Hara for his helpful comments.
The author wishes to thank Doctor Duo Li  for his helpful comments and discussions on the earlier version of this paper, and for permitting this author to include Theorem~\ref{thm:symp}, which is obtained by the discussion with him.

A part of this paper was written during the author's stay at the University of Trento.
He is grateful to this institution for its hospitality and financial support.
\end{acknowledgements}

\section{Preliminaries}
\subsection{Fundamental properties of simple $K$-equivalence}
Let $\chi \colon X_1 \dto X_2$ be a simple $K$-equivalent map, and 
\[
\xymatrix{
 &E_1 \ar@{^{(}->}[dr] \ar[ldd]_-{g_1} & &E_2 \ar@{_{(}->}[dl] \ar[rdd]^-{g_2} & \\
 & & \tX \ar[ld]_-{f_1} \ar[rd]^-{f_2} & & \\
 Y_1 \ar@{^{(}->}[r] & X_1 \ar@{-->}[rr]^{\chi} & & X_2 & Y_2 \ar@{_{(}->}[l]
}
\]
be its resolution of indeterminacy by smooth blow-ups along $Y_i \subset X_i$.
Here $E_1$ and $E_2$ are the exceptional divisors.
In what follows, we will tacitly assume the condition $f_1^{*}K_{X_1} = f_2^*K_{X_2}$.

\begin{proposition}[Exceptional divisors {\cite[Lemma~2.1]{Li18}}]\label{prop:excdiv}
Let the notaion be as above.
Then $E_1 =E_2$ and $\codim_{X_1} Y_1 = \codim_{X_2} Y_2$.
\end{proposition}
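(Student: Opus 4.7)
The plan is to extract the result directly from the canonical bundle formula for a smooth blow-up together with the $K$-equivalence hypothesis. Since $f_i\colon \tX\to X_i$ is the blow-up of the smooth subvariety $Y_i$ of codimension $r_i$ with (prime) exceptional divisor $E_i$, one has the standard identity
\[
K_{\tX} \;=\; f_i^{*}K_{X_i} + (r_i-1)\,E_i
\]
for $i=1,2$. Subtracting these two expressions and invoking the assumption $f_1^{*}K_{X_1}=f_2^{*}K_{X_2}$ immediately gives the divisorial equality
\[
(r_1-1)\,E_1 \;=\; (r_2-1)\,E_2.
\]

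Next I would argue that both sides have strictly positive coefficients. Indeed, if $r_i=1$ then $Y_i$ is a Cartier divisor and $f_i$ is an isomorphism; combining this on either side with the formula above and the nontriviality of the other blow-up forces $\chi$ to be an isomorphism, contradicting our standing assumption. Hence $r_1,r_2\geq 2$, and in particular both $E_1$ and $E_2$ are nonzero prime divisors (the exceptional locus of a smooth blow-up of codimension $\geq 2$ is irreducible, being a projective bundle over $Y_i$).

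Finally, the equality $(r_1-1)E_1 = (r_2-1)E_2$ of effective divisors with both coefficients positive and both divisors prime forces the supports to coincide, $E_1=E_2$, and then the coefficients to agree, giving $r_1=r_2$. Since $r_i=\codim_{X_i}Y_i$, this is the desired conclusion.

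I do not expect any genuine obstacle: the entire argument is the elementary numerical consequence of comparing canonical bundle formulas, and the only point that requires a moment's care is the reduction to the case where both $f_i$ are nontrivial blow-ups, i.e.\ where $r_i\geq 2$, so that the coefficients in the divisorial equality are nonzero and the prime decomposition argument applies.
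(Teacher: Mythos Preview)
Your argument is essentially the paper's own proof: write $K_{\tX}\sim f_i^{*}K_{X_i}+(r_i-1)E_i$, subtract using the $K$-equivalence hypothesis, and read off the conclusion. The only point to tighten is that the resulting relation $(r_1-1)E_1\sim(r_2-1)E_2$ is a priori only a linear equivalence, not a literal equality of divisors; the paper's parenthetical ``note that $E_i$ are exceptional divisors'' is precisely what upgrades this to $E_1=E_2$ (for instance, pushing forward by $f_1$ forces $(f_1)_*E_2\sim 0$, hence $E_2$ is $f_1$-exceptional and so supported on $E_1$, and symmetrically).
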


\begin{proof}
Set $r_i \coloneqq \codim_{X_i} Y_i$.
Then
\[
 K_{\tX} \sim f_i^*K_{X_i}+(r_i-1)E_i.
\]
Thus the condition $f_1^{*}K_{X_1} = f_2^*K_{X_2}$ together with the above equality yields our assertions (note that $E_i$ are exceptional divisors).
\end{proof}

Thus we have the following diagram as in \eqref{diagram:simple}:

\begin{equation}
\xymatrix{
 & & E \coloneqq E_1=E_2 \ar@{^{(}->}[d] \ar[lldd]_-{g_1} \ar[rrdd]^-{g_2} & & \\
 & & \tX \ar[ld]_-{f_1} \ar[rd]^-{f_2} & & \\
 Y_1 \ar@{^{(}->}[r] & X_1 \ar@{-->}[rr]^{\chi} & & X_2 & Y_2 \ar@{_{(}->}[l] \\
}
\end{equation}

\begin{definition}[Codimension and exceptional divisor]\label{def:codim}
Let $\chi \colon X_1 \dto X_2$ be a simple $K$-equivalent map, and the notation as above.
Then its \emph{codimension} $r$ is defined as $\codim_{X_1} Y_1 = \codim_{X_2} Y_2$.
Its \emph{exceptional divisor} $E$ is the exceptional divisor of $f_i$.
\end{definition}

As a corollary of Proposition~\ref{prop:excdiv}, we have the following:

\begin{corollary}[Two projective bundle structures]\label{cor:twoPB}
 Let $E$ be the exceptional divisor of a simple $K$-equivalent map in codimension $r$.
 Then $E$ admits two $\bP^{r-1}$-bundle structures $g_1$ and $g_2$.
\end{corollary}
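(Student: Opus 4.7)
The plan is to invoke the standard description of the exceptional divisor of a smooth blow-up and then combine it with Proposition~\ref{prop:excdiv}. For each $i \in \{1,2\}$, the morphism $f_i \colon \tX \to X_i$ is, by hypothesis, the blow-up of the smooth projective variety $X_i$ along the smooth center $Y_i$, whose codimension is $r$ by Proposition~\ref{prop:excdiv}. By the functorial description of smooth blow-ups, the exceptional divisor $E_i$ of $f_i$ is canonically isomorphic to the projectivized conormal bundle $\bP(\sC_{Y_i/X_i})$, and the restriction $g_i \coloneqq f_i|_{E_i} \colon E_i \to Y_i$ coincides with the structure projection of this projective bundle; in particular, $g_i$ is a $\bP^{r-1}$-bundle (this is where the codimension $r$ enters, via the rank of $\sC_{Y_i/X_i}$).

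The second ingredient is already supplied by Proposition~\ref{prop:excdiv}: the two a priori different exceptional divisors $E_1$ and $E_2$ in $\tX$ agree as subvarieties, and we have simply denoted their common value by $E$. Consequently, both $g_1$ and $g_2$ are morphisms with source $E$, and by the preceding paragraph each equips $E$ with a $\bP^{r-1}$-bundle structure (over $Y_1$ and $Y_2$ respectively).

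Since the statement is essentially an assembly of Proposition~\ref{prop:excdiv} with the textbook description of a smooth blow-up, I do not anticipate any substantive obstacle; the only point worth being careful about is to record that the common rank $r$ on both sides—which is what makes the two projective bundles have the same relative dimension—is a genuine consequence of $K$-equivalence rather than an extra assumption, and this is precisely the content of Proposition~\ref{prop:excdiv}. The proof therefore consists of citing that proposition and then invoking the standard blow-up formula $E_i \simeq \bP(\sC_{Y_i/X_i})$ together with the identification $g_i = f_i|_{E_i}$.
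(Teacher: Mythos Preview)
Your proof is correct and is exactly the argument the paper has in mind: the corollary is stated without proof precisely because it follows immediately from Proposition~\ref{prop:excdiv} together with the standard identification $E_i \simeq \bP(\sC_{Y_i/X_i})$ for a smooth blow-up. There is nothing to add.
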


Set $\cO_E(1) \coloneqq \cO(-E)|_E$ and $\cO_E(m)\coloneqq \cO_E(1)^{\otimes{m}}$.
Since $f_i$ is a smooth blow-up, we have $E \simeq \bP_{Y_i}(\sC_{Y_i/X_i})$, where $\sC_{Y_i/X_i}$ is the conormal bundle of $Y_i$ in $X_i$.
Moreover the line bundle $\cO_E (1)$ gives the relative tautological bundle of this projectivization.
By an easy calculation, we have
\[
r \cO_E(1) +g_i^*(-K_{X_i}|_{Y_i}) = -K_{E} = r\cO_E(1) +g_i^*(-K_{Y_i}-\det (\sC_{Y_i/X_i})).
\]
Thus, we have the following:

\begin{proposition}\label{prop:canonicalbundle}
\hfill
\begin{enumerate}
 \item $\cO_E(1)$ is the relative tautological divisor of the projective bundles $g_1$ and $g_2$.
 \item $c_1 (\sC_{Y_i/X_i}) = -K_{Y_i} + K_{X_i}|_{Y_i}$.
\end{enumerate}
\end{proposition}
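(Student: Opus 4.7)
The plan is to assemble standard blow-up and projective-bundle computations; most of the relevant identities have already been set up in the lines preceding the statement, so the proof amounts to unpacking them.

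For (1), I would invoke the classical description of a smooth blow-up: since $f_i\colon \tX \to X_i$ is the blow-up along the smooth subvariety $Y_i$ of codimension $r$, the exceptional divisor is canonically identified with $\bP(\sC_{Y_i/X_i})$ in the Grothendieck convention fixed in the paper, and under this identification the relative tautological quotient line bundle $\cO_{\bP(\sC_{Y_i/X_i})}(1)$ agrees with $\cO_{\tX}(-E)|_E$. Since $\cO_E(1)$ was defined as $\cO(-E)|_E$, part (1) is immediate.

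For (2), my plan is to compute $-K_E$ in two ways and equate them. First, from $K_{\tX}=f_i^*K_{X_i}+(r-1)E$ together with the adjunction formula $K_E=(K_{\tX}+E)|_E$, one obtains
\[
-K_E = g_i^*(-K_{X_i}|_{Y_i}) + r\,\cO_E(1).
\]
Second, applying the relative canonical bundle formula to the projective bundle $g_i\colon E \simeq \bP(\sC_{Y_i/X_i}) \to Y_i$ (a consequence of the relative Euler sequence in Grothendieck's convention) yields
\[
-K_E = g_i^*\bigl(-K_{Y_i}-\det\sC_{Y_i/X_i}\bigr) + r\,\cO_E(1).
\]
Equating the two right-hand sides, cancelling the $r\,\cO_E(1)$ term, and using the injectivity of $g_i^*$ on Picard groups (which holds because $g_i$ is a projective bundle) gives
\[
-K_{X_i}|_{Y_i} = -K_{Y_i}-\det\sC_{Y_i/X_i},
\]
which rearranges to the desired identity $c_1(\sC_{Y_i/X_i}) = -K_{Y_i}+K_{X_i}|_{Y_i}$.

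The only thing requiring any real care is keeping the sign and duality conventions for the tautological line bundle on $\bP(\sC_{Y_i/X_i})$ straight; beyond that there is no genuine obstacle, and the argument is essentially bookkeeping around the two expressions for $-K_E$.
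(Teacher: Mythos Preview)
Your proof is correct and follows exactly the approach taken in the paper: part (1) comes directly from the standard identification of the exceptional divisor of a smooth blow-up with $\bP(\sC_{Y_i/X_i})$ and $\cO_{\tX}(-E)|_E$ with the tautological bundle, and part (2) is obtained by equating the two expressions for $-K_E$ coming from adjunction on the blow-up and from the relative canonical bundle formula for the projective bundle. The paper presents this computation in the paragraph immediately preceding the proposition, and your write-up simply makes the steps (including the injectivity of $g_i^*$ on Picard groups) more explicit.
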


\subsection{Characterization of roofs}
Let $(V,\sE)$ be a simple Mukai pair with rank $r$, and $W$ the roof $\bP_V(\sE)$.
We will denote by $\xi_{\sE}$ the relative tautological divisor of this projectivization $\bP_V(\sE)$.
Then $W$ admits another $\bP^{r-1}$-bundle structure $g^+ \colon W \to V^+$.
Since $-K_W = r \xi_{\sE}$, the divisor $\xi_{\sE}$ restricts to $\cO(1)$ on each $g^+$-fiber $\bP^{r-1}$.
Thus $g^+ \colon W \to V^+$ is given by the projectivization of the vector bundle $\sE^+ \coloneqq (g^+)_*\cO(\xi_\sE)$.
By \cite[Proposition~3.3]{NO07}, the pair $(V^+, \sE^+)$ is also a Mukai pair.
Thus the situation is symmetric in $(V,\sE)$ and $(V^+,\sE^+)$.
The following proposition gives easy, but useful, characterizations of roofs of $\bP^{r-1}$-bundles:

\begin{proposition}[Roofs]\label{prop:roof}
 Let $W$ be a smooth projective Fano manifold of Picard number two.
 Assume that every extremal contraction $W \to V_i$ ($i=1$, $2$) is a smooth $\bP^{r-1}$-fibration.
 Then the following are equivalent:
\begin{enumerate}
 \item \label{prop:roof1} $W$ is a roof of $\bP^{r-1}$-bundles.
 \item \label{prop:roof2} The index of $W$ is $r$.
 \item \label{prop:roof3} There exists a divisor $D$ on $W$ such that $\cO_W(D)$ restricts to $\cO(1)$  on all fibers $\bP^{r-1}$ of both extremal contractions.
\end{enumerate}
\end{proposition}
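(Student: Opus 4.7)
The plan is to run the cycle $(1) \Rightarrow (2) \Rightarrow (3) \Rightarrow (1)$.

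For $(1) \Rightarrow (2)$, I would write $W = \bP_V(\sE)$ for a simple Mukai pair of rank $r$ and apply the relative canonical formula $K_W = -r\xi_\sE + \pi^*(K_V + \det \sE)$. The Mukai-pair condition $c_1(V) = c_1(\sE)$ kills the pullback term, so $-K_W = r\xi_\sE$ and the index is at least $r$. To see equality, I would use the decomposition $\Pic(W) = \bZ \xi_\sE \oplus \pi^*\Pic(V)$: any representation $-K_W = m(a\xi_\sE + \pi^*L)$ forces $ma = r$ and $mL = 0$, and the torsion-freeness of $\Pic(V)$ (a smooth Fano variety is simply connected) gives $L = 0$, hence $m \leq r$.

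For $(2) \Rightarrow (3)$, I would set $D := H$ where $-K_W = rH$ is the index-realizing divisor. Restricting to a fiber $F \cong \bP^{r-1}$ of either extremal contraction $\pi_i \colon W \to V_i$, smoothness gives $-K_W|_F = -K_F = \cO_{\bP^{r-1}}(r)$, and since $\Pic(\bP^{r-1}) = \bZ$, the relation $rD|_F = \cO(r)$ forces $D|_F = \cO(1)$.

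For $(3) \Rightarrow (1)$, the crucial observation is that the class $-K_W - rD$ restricts trivially to every fiber of both extremal contractions, hence intersects both extremal rays in zero; since these rays span $N_1(W)_\bR$, the class is numerically trivial, and hence zero in $\Pic(W)$ by torsion-freeness for Fano $W$. Cohomology and base change applied to $\pi_i$ then produces rank-$r$ vector bundles $\sE_i := (\pi_i)_*\cO_W(D)$ with surjective evaluation $\pi_i^*\sE_i \twoheadrightarrow \cO_W(D)$, and a fiber-dimension count shows that the induced $W \hookrightarrow \bP_{V_i}(\sE_i)$ is an isomorphism with $D$ as tautological class. The relative canonical formula for $\pi_1$ combined with $-K_W = rD$ then yields $K_{V_1} + \det \sE_1 = 0$; ampleness of $\sE_1$ follows from that of the Cartier divisor $D = -K_W/r$, so $V_1$ is Fano; the projective bundle formula gives $\rho(V_1) = \rho(W) - 1 = 1$. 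Hence $(V_1, \sE_1)$ is a simple Mukai pair (with $\pi_2$ furnishing the second $\bP^{r-1}$-bundle structure), realizing $W$ as a roof.

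The main obstacle is the implication $(3) \Rightarrow (1)$: a single fiber-restriction hypothesis on $D$ must be leveraged into a complete roof-of-Mukai-pair structure on $W$. The two delicate moves are the passage from the numerical identity $-K_W \equiv rD$ to a linear equivalence in $\Pic(W)$ (which relies on $W$ being Fano, hence with torsion-free Picard group), and the simultaneous identification of both $\pi_i$ with projective bundles having $D$ as tautological divisor, from which ampleness of the $\sE_i$ and the Fano property of the $V_i$ then follow automatically.
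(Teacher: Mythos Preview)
Your proof is correct and follows the same underlying logic as the paper. The paper runs the implications as $(1)\Rightarrow(2)$, $(2)\Rightarrow(3)$, $(3)\Rightarrow(2)$, and then closes the loop by citing \cite[Proposition~3.3]{NO07} for $(2)\Rightarrow(1)$; your cycle $(1)\Rightarrow(2)\Rightarrow(3)\Rightarrow(1)$ differs only in that you unpack the last step explicitly rather than invoking the reference. Concretely, your argument for $(3)\Rightarrow(1)$ first establishes $-K_W = rD$ via the numerical identity on the two extremal rays plus torsion-freeness of $\Pic(W)$ --- exactly the paper's proof of $(3)\Rightarrow(2)$ --- and then carries out by hand what \cite{NO07} supplies: pushing $\cO_W(D)$ forward to get the bundle $\sE_i$, identifying $W\simeq\bP_{V_i}(\sE_i)$, and reading off the Mukai-pair conditions from the relative canonical formula and the ampleness of $D$. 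The gain of your route is self-containment (no external citation needed, and you make visible why the smooth $\bP^{r-1}$-\emph{fibrations} are in fact $\bP^{r-1}$-\emph{bundles}); the paper's route is shorter on the page.
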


\begin{proof}
 We have already seen \ref{prop:roof1} $\implies$ \ref{prop:roof2} and \ref{prop:roof1} $\implies$ \ref{prop:roof3}.
 By \cite[Proposition~3.3]{NO07}, \ref{prop:roof2} implies \ref{prop:roof1}.
 Also, by adjunction, \ref{prop:roof2} $\implies$ \ref{prop:roof3}.
 
Assume \ref{prop:roof3}. Then $-K_W \nequiv rD$, since they coincide on each $g_i$-fiber and $N_1(W)$ is spanned by $g_i$-fibers.
 Since numerical equivalence and linear equivalence coincide on Fano manifolds, \ref{prop:roof2} holds.
\end{proof}

\begin{remark}
Let $W$ be a Fano variety as in the assumption of Proposition~\ref{prop:roof}.
To the best of the author's knowledge, there are no examples $W$ which do not satisfy these equivalent conditions \ref{prop:roof1}--\ref{prop:roof3}.
\end{remark}

\section{Manifolds with two projective bundle structures}\label{sect:twoPB}
Let $\chi \colon X_1 \dto X_2$ be a simple $K$-equivalent map in codimension $r$.
Then, by Corollary~\ref{cor:twoPB}, the exceptional divisor $E$ admits two $\bP^{r-1}$-bundle structures.
In this section we will study the structure of its Kleiman-Mori cone $\cNE(E)$.
Roughly speaking, the results of this section show that, if a projective manifold admits two projective bundle structures, then the corresponding rays $R_1$ and $R_2$ span a two-dimensional extremal face in its Kleiman-Mori cone, and the contraction of this face makes $E$ a family of  Fano manifolds with two projective bundle structures. 

\subsection{}

We start with a more general situation as follows:
Let $X$ be a normal projective variety.
A \emph{basic diagram} on $X$ is a diagram of the following form:
\[
\xymatrix{
 & U \ar[ld]_-{\pi} \ar[rd]^-{e} & \\
 S & & X,
}
\]
where $U$ and $S$ are normal projective varieties.
In what follows, we will assume $\pi_*\cO_U = \cO _S$ for simplicity, and hence all the $\pi$-fibers are connected.
Then the $S$-equivalent relation on $X$ is defined as follows: two points $x_1$ and $x_2$ are said to be \emph{$S$-equivalent} if these two points are contained in a connected chain of $e$-images of $\pi$-fibers, i.e.\ there are (finite) points $s_j \in S$ such that $x_i \in \bigcup e(\pi^{-1}(s_j))$ and $\bigcup e(\pi^{-1}(s_j))$ is connected.
In this situation, it is known that there is a rational map $X \dto Y $ which gives, not on the whole variety $X$ but on an open subset of $X$, the quotient map for this $S$-equivalent relation \cite{Cam81,KMM92a} (see also \cite[Chapter 5]{Deb01}, \cite[Chapter IV]{Kol96}).
More precisely, we have:

\begin{theorem}
There exist a non-empty open subset $X^0 \subset X$ and a projective morphism $q: X^0 \to Y^0$ such that each $q$-fiber is an $S$-equivalent class. 
\end{theorem}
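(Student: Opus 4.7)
The plan is to follow the classical construction of the rational quotient for a covering family due to Campana and Koll\'ar--Miyaoka--Mori \cite{Cam81,KMM92a} (see also \cite[Chapter IV]{Kol96}). The idea is to approximate the $S$-equivalence relation by the relations generated by chains of length at most $k$, observe that these stabilize, and then take a geometric quotient over a flat, well-behaved open locus of $X$.

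First, for each $k \geq 1$ I form the iterated fiber product
\[
U^{(k)} \coloneqq \underbrace{U \times_X U \times_X \cdots \times_X U}_{k \text{ copies}},
\]
where adjacent factors are glued along $e$. A closed point of $U^{(k)}$ corresponds to a chain $s_1, \dots, s_k \in S$ together with intersection points $x_{i,i+1} \in e(\pi^{-1}(s_i)) \cap e(\pi^{-1}(s_{i+1}))$. The initial and final evaluations give a morphism
\[
\Phi_k = (\alpha_k, \omega_k) \colon U^{(k)} \longrightarrow X \times X,
\]
whose image $Z_k$ is constructible and contains the graph of $S$-equivalence truncated at chain length $k$. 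Since $X \times X$ is Noetherian, the Zariski closures $\overline{Z_k}$ form an increasing chain that stabilizes at some $N$; denote the stable closure by $Z$.

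Next, I apply generic flatness to the first projection $p_1 \colon Z \to X$. On a non-empty open $X^0 \subset X$ the map $p_1$ is flat with equidimensional geometrically reduced fibers, and by shrinking further I may assume that every irreducible component of $Z$ either is removed or dominates $X^0$ via $p_1$. Symmetry of the relation follows from reversing the chain, and transitivity follows from the stabilization at step $N$: concatenating two chains of lengths $\leq N$ yields one of length $\leq 2N$ whose endpoints lie in $\overline{Z_{2N}} = Z$. Hence for $x \in X^0$ the fiber $p_1^{-1}(x)$ equals, set-theoretically, the $S$-equivalence class of $x$. Applying Stein factorization to $p_2 \colon Z|_{X^0} \to X$ (or, equivalently, taking the morphism defined by the fibers of $p_1$) produces the desired projective morphism $q \colon X^0 \to Y^0$ whose fibers are exactly the $S$-equivalence classes.

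The main technical subtlety is in the last step: ensuring that after shrinking, every fiber of $q$ is \emph{exactly} one $S$-equivalence class, rather than a proper closed subset or a union of several classes. Closure under concatenation of chains controls the latter, while ruling out the former relies on working over the locus where the relation generated by chains of length $\leq N$ already saturates, combined with generic flatness so that no equivalence class is improperly cut down upon passing to $X^0$. Since this construction is entirely standard, I would in practice simply invoke \cite[Chapter IV]{Kol96} or \cite{KMM92a}, noting that the assumption $\pi_{*} \cO_U = \cO_S$ supplies the connectedness of $\pi$-fibers needed for the general framework to apply verbatim.
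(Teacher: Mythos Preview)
The paper does not supply its own proof of this statement: it is presented as a known result, attributed to Campana \cite{Cam81} and Koll\'ar--Miyaoka--Mori \cite{KMM92a}, with \cite[Chapter~5]{Deb01} and \cite[Chapter~IV]{Kol96} given as references for full accounts. Your sketch follows exactly this classical construction of the rational quotient, and your closing remark---that in practice one would simply invoke \cite[Chapter~IV]{Kol96} or \cite{KMM92a}---is precisely what the paper does.
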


The above map $q$ is called the \emph{$S$-equivalent quotient map}.
For accounts of this topic, our basic references are \cite[Chapter 5]{Deb01}, \cite[Chapter IV]{Kol96}.

In general, the quotient map is not defined on the whole variety $X$.
Thus it is natural to ask when the quotient map is defined on the whole variety $X$.
For example, in \cite[Section 2]{Kan17}, it is proved that, if $\pi$ and $e$ are smooth $\bP^1$-fibrations and all varieties are smooth, then the quotient map is actually a smooth morphism defined on the whole variety $X$.
The following theorems \ref{thm:quotient} and \ref{thm:smooth} generalize this theorem.

\begin{theorem}[Quotient map]\label{thm:quotient}
Let $(S \xleftarrow{\pi} U \xrightarrow{e} X)$ be a basic diagram on $X$.
Assume that $e$ and $\pi$ are equidimensional with irreducible fibers.
Then there is a projective morphism $q \colon X \to M$ onto a projective normal variety $M$ whose fibers are the $S$-equivalent classes.
Moreover the map $q$ is equidimensional with irreducible fibers.
\end{theorem}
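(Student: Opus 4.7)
The plan is to build the $S$-equivalence relation as a closed subvariety $R \subset X \times X$ by iterating the basic diagram, and then to construct the quotient $M$ as a subvariety of a Chow variety of $X$. The equidimensionality hypothesis on $e$ and $\pi$ is used in two places: to make the iteration terminate with the right regularity (fibers remain irreducible of constant dimension), and to make the Chow classifying map a morphism on the whole variety $X$ rather than a merely rational map.

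First I would iterate the basic diagram. Let $R_1 \subset X \times X$ be the scheme-theoretic image of $U \times_S U \to X \times X$, $(u_1, u_2) \mapsto (e(u_1), e(u_2))$; the fiber of the first projection $R_1 \to X$ over a point $x$ is $e(\pi^{-1}(\pi(e^{-1}(x))))$, which by the hypothesis on $\pi$ and $e$ is irreducible of dimension independent of $x$. Define inductively $R_{k+1}$ as the image of $R_k \times_X R_1$ in $X \times X$. Each $R_k$ is closed in $X \times X$, and inductively both projections $R_k \to X$ are equidimensional with irreducible fibers. The ascending chain $R_1 \subset R_2 \subset \cdots$ stabilizes at some $R = R_N$ by Noetherianity, and $R$ is the graph of the $S$-equivalence relation; its fibers over $x \in X$ are precisely the $S$-equivalence classes, hence irreducible of a common dimension $d$.

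Next I would construct the quotient. Fix an ample divisor $H$ on $X$; since $R \to X$ is an equidimensional family of irreducible subvarieties over the connected base $X$, the $H$-degree of each fiber is a constant $\delta$. The universal property of the Chow variety yields a morphism $\varphi \colon X \to \Chow_{d,\delta}(X)$ sending $x$ to the cycle of its $S$-equivalence class. Taking $M$ to be the normalization of $\varphi(X)$ and $q \colon X \to M$ the induced morphism gives a projective morphism onto a projective normal variety whose fibers are by construction the $S$-equivalence classes, hence irreducible of common dimension $d$, as required.

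The main obstacle is the iteration step: verifying that scheme-theoretic composition $R_k \circ R_1$ preserves equidimensionality and irreducibility of fibers. While each fiber of $R_1 \to X$ is irreducible of a fixed dimension, the union $\bigcup_{y \in R_k(x)} R_1(y)$ could \emph{a priori} become reducible or change dimension as $x$ varies. The fix is to realize this union as the image of the fiber of $R_k \times_X R_1 \to X$ over $x$, which is an iterated fiber product of irreducible varieties with at least one projection having all fibers irreducible; irreducibility and the expected dimension then follow from standard fiber-product computations combined with the explicit dimension formulas afforded by the equidimensionality of $\pi$ and $e$.
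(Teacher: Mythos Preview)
Your overall strategy coincides with the paper's: build the $S$-equivalence relation as an increasing sequence of closed subvarieties of $X\times X$ by iterating the correspondence, show that each stage is a family of irreducible cycles of constant dimension over $X$, stabilize, and then invoke the universal property of the Chow variety. The paper starts from the diagonal $V_0\subset X\times X$ and sets $V_{i+1}\coloneqq E(\Pi^{-1}(\Pi(E^{-1}(V_i))))$ with $\Pi=\pi\times\id$ and $E=e\times\id$; unwinding this gives exactly your $R_k$. For the quotient the paper takes the Stein factorization of $X\to\Chow(X)$, which, since each $S$-class is irreducible and hence connected, agrees with your normalization of the image.

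The gap is in your last paragraph. You are right that the fiber of $R_k\times_X R_1\to X$ over $x$ is irreducible of constant dimension (it fibers over the irreducible $R_k(x)$ with irreducible equidimensional fibers $R_1(y)$), and that its image $R_{k+1}(x)$ is therefore irreducible. But ``standard fiber-product computations combined with the explicit dimension formulas'' do \emph{not} yield that $\dim R_{k+1}(x)$ is independent of $x$: the projection from the fiber of $R_k\times_X R_1$ over $x$ onto $R_{k+1}(x)$ can have fibers of varying dimension as $x$ moves, so the image dimension can drop by a variable amount. Nothing in the bare equidimensionality of $\pi$ and $e$ prevents this, and this is precisely the point where the general rational $S$-quotient fails to extend to a morphism.

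The paper supplies the missing mechanism by an intersection-theoretic argument. Once $\pr_2\colon E^{-1}(V_i)\to X$ is known to be a well-defined family of irreducible algebraic cycles in the sense of \cite[Chapter~I]{Kol96}, the number
\[
(\pi^{*}H_S+tH_U)^{d}\cdot\bigl(E^{-1}(V_i)\cap\pr_2^{-1}(x)\bigr)
\]
is independent of $x\in X$ for ample $H_S$, $H_U$ \cite[Chapter~IV, Proposition~2.10]{Kol96}. Expanding this polynomial in $t$ and using the projection formula together with ampleness of $H_S$, one reads off $\dim\pi\bigl(E^{-1}(V_i)\cap\pr_2^{-1}(x)\bigr)$ from the vanishing of the top coefficients, so that dimension is constant in $x$. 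This is exactly what is needed to push the induction through the ``image'' steps. If you insert this device (or an equivalent one) at the point where you pass from $R_k\times_X R_1$ to $R_{k+1}$, your outline becomes a complete proof identical to the paper's.
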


\begin{theorem}[Smoothness of contraction {\cite[Theorem~5.2]{DPS94}, \cite[Theorem~4.4]{SW04}}]\label{thm:smooth}
Let $(S \xleftarrow{\pi} U \xrightarrow{e} X)$ be a basic diagram on $X$ as in Theorem~\ref{thm:quotient}.
Assume moreover the following three conditions:
\begin{enumerate}
\item $X$ is smooth.
\item $\pi$ is a smooth $\bP^1$-fibration.
\item $e^*T_X$ is $\pi$-nef. 
\end{enumerate}
Then the quotient morphism $q$ is smooth.
\end{theorem}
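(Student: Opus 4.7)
The plan is to combine Theorem~\ref{thm:quotient} with the deformation theory of free rational curves: the $\pi$-nef hypothesis makes every $\pi$-fibre a free rational curve in $X$, and this freeness forces smoothness of the whole quotient $q$. By Theorem~\ref{thm:quotient} we already have a projective equidimensional morphism $q\colon X \to M$ with irreducible fibres realising the $S$-equivalence classes; since $X$ is smooth (hence Cohen--Macaulay), a morphism $q$ of this shape is smooth once one knows that each scheme-theoretic fibre is smooth of the expected dimension $\dim X - \dim M$ and that $M$ is regular. Both facts will be extracted from a tangent-space computation showing that $dq_x\colon T_{X,x}\to T_{M,q(x)}$ is surjective for every $x\in X$.

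Next I would analyse the rational curves coming from $\pi$-fibres. On every $\pi$-fibre $\ell\simeq \bP^{1}$ one has a splitting $e^{*}T_X|_\ell\simeq \bigoplus \cO_\ell(a_i)$ with $a_i\ge 0$, so $H^{1}(\ell,e^{*}T_X|_\ell)=0$ and the rational curve $C=e(\ell)\subset X$ is free. Standard deformation theory of $\Hom(\bP^{1},X)$ then gives that through every point of $C$ the family of deformations of $\ell$ is unobstructed, and that the associated evaluation map is a smooth morphism onto an open neighbourhood of that point inside the $S$-equivalence class $F$ through $x$. Iterating along the finite chain of $\pi$-fibres connecting $x$ to an arbitrary point of $F$, one glues these local smooth pieces to show that $F$ is smooth of dimension $\dim X - \dim M$ near $x$, and that the induced tangent map at $x$ sweeps out a subspace of $T_{X,x}$ equal to $T_{F,x}$; by equidimensionality of $q$ this subspace is exactly $\ker dq_x$, so $dq_x$ is surjective. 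Smoothness of $X$ and of every fibre, together with surjectivity of every $dq_x$, then forces $M$ to be regular of the expected dimension and $q$ to be a smooth morphism.

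The main obstacle I foresee lies in the chain-propagation step: one has to check that the freeness of individual $\pi$-fibres really assembles, along an iterated $S$-equivalence chain, into a tangent map whose image fills the whole $T_{F,x}$ and not merely a proper subspace. This is typically handled by upper-semicontinuity of the splitting type of $e^{*}T_X$ on $\pi$-fibres, together with a careful choice of a chain of representatives inside each equivalence class so that the nef summands accumulate in exactly the directions needed to exhaust $\ker dq_x$.
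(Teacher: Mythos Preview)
Your starting point is sound: the $\pi$-nef hypothesis does make every $\pi$-fibre a free rational curve in $X$, and this is indeed the mechanism behind smoothness. But the proposal has a genuine gap precisely where you flag it, and the suggested fix does not close it.

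The problem is twofold. First, freeness of individual curves gives you unobstructed deformations of each curve, but it does not by itself compute $T_{F,x}$ for the scheme-theoretic fibre $F$: the image of the evaluation map from deformations of a single $\ell$ through $x$ is an open set of $X$, not of $F$, and restricting to deformations that stay inside $F$ requires knowing something about $F$ already. Your chain-propagation idea would need an actual mechanism to show that tangent directions from successive links accumulate to fill a space of dimension exactly $\dim X-\dim M$; upper-semicontinuity of splitting types does not supply this, because it controls degeneration of a single curve, not how tangent contributions from different curves in a chain interact at a fixed point. Second, and more seriously, you never address why the scheme-theoretic fibre is reduced. Even if $(F)_{\mathrm{red}}$ were smooth of the right dimension, $q$ could still fail to be smooth if the fibre carries nilpotents; your surjectivity-of-$dq_x$ argument is circular here, since it presupposes a well-behaved $T_{M,q(x)}$ before regularity of $M$ is established.

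The paper's route avoids both issues by shifting the deformation theory from curves to the whole fibre. One first shows (following \cite[Lemma~4.12]{SW04}) that each reduced fibre $F$ is a smooth Fano manifold and that the restricted basic diagram makes $e_F^{*}\sN_{F/X}$ trivial on every $\pi_F$-fibre. The key new input is then Lemma~\ref{lem:trivial}: triviality along all $\pi_F$-fibres on a chain-connected Fano forces $\sN_{F/X}$ itself to be trivial. With $\sN_{F/X}\simeq\cO_F^{\oplus\dim M}$ one has $h^0(\sN_{F/X})=\dim M$ and $h^1=0$, so the Hilbert scheme is smooth at $[F]$ and the fibres of $q$ are reduced; smoothness of $q$ then follows from \cite[Lemma~4.13]{SW04}. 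The idea you are missing is this global triviality of the normal bundle---it is what replaces the chain-propagation step and simultaneously handles reducedness.
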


The proof of Theorem~\ref{thm:quotient} relies on the theory of algebraic cycles and Chow varieties.
For a detailed account of families of algebraic cycles and Chow varieties, we refer the reader to \cite[Chapter I]{Kol96}.

\begin{proof}[Proof of Theorem~\ref{thm:quotient}]
Consider the following diagram obtained by taking products with $X$:
\[
\xymatrix{
 & U \times X \ar[ld]_-{\Pi \coloneqq \pi\times \id} \ar[rd]^-{E \coloneqq e \times \id} & \\
 S \times X & & X \times X.
}
\]

Let $V_0$ be the diagonal in $X \times X$ and define inductively $V_{i+1}$ as
\[
E(\Pi^{-1}(\Pi(E^{-1}(V_i))))
\]
with its reduced strucure.
We will consider  $V_i$ as a scheme over $X$ via the second projection $\pr_2$, and denote by $V_i(x)$ the fiber $V_i \cap \pr_2^{-1}(x)$.
By the construction, $V_i(x)$ is the set of points that can be connected to $x$ by an $S$-chain of length $i$.

\begin{step}
Here we will prove, by induction on $i$, that $\pr_2 \colon V_i \to X$ is a well-defined family of irreducible algebraic cycles (see \cite[Chapter I, Section 3]{Kol96} for the definition and properties).
Trivially $\pr_2 \colon V_0 \to X$ is a well-defined family of irreducible algebraic cycles.
Assume that $\pr_2 \colon V_i \to X$ is a well-defined family of irreducible algebraic cycles.
Since $E \colon U\times X \to X \times X$ is equidimensional with irreducible fibers,  so is the map $\pr_2 \colon E^{-1}(V_i) \to X$.
Thus $\pr_2 \colon E^{-1}(V_i) \to X$ is a well-defined family of irreducible algebraic cycles by \cite[Chapter I, Theorem~3.17]{Kol96}.
Next consider $\pr_2 \colon \Pi(E^{-1}(V_i)) \to X$.
Take ample divisors $H_S$ on $S$ and $H_U$ on $U$, and denote by $d$ the relative dimension of $\pr_2 \colon E^{-1}(V_i) \to X$.
Then, for $t \in \bR$,  the number
\[
(\pi^{*}H_S+tH_U)^d\cdot(E^{-1}(V_i)\cap \pr_2 ^{-1}(x))
\]
is independent of $x \in X$ by \cite[Chapter IV, Prop~2.10]{Kol96}.
Thus the morphism $\pr_2 \colon \Pi(E^{-1}(V_i)) \to X$ is equidimensional with irreducible fibers, and hence $\pr_2 \colon \Pi(E^{-1}(V_i)) \to X$ is again a well-defined family of irreducible algebraic cycles.
By iterating this procedure, we see that $\pr_2 \colon V_{i+1} \to X$ is a well-defined family of irreducible algebraic cycles.
\end{step}

\begin{step}
In this step, we construct the quotient morphism.
Since $\pr_2 \colon V_i \to X$ is a well-defined family of irreducible algebraic cycles, the total space $V_i$ is irreducible.
Hence there exists an integer $k$ such that
\[
V_0 \subsetneq V_1 \subsetneq \cdots \subsetneq V_k = V_{k+1} = \cdots \eqqcolon V_\infty.
\]
By definition of $V_i$, the fiber $V_{\infty}(x) \coloneqq V_{\infty}\cap \pr _2^{-1}(x)$ is the $S$-equivalent class of $x \in X$.
Since $\pr_2 \colon V_{\infty} \to X$ is a well-defined family of irreducible algebraic cycles, we have a morphism $\overline q \colon X \to \Chow (X)$ by the universal property of Chow varieties.
Let $X \xrightarrow{q} M \to \Chow(X)$ be the Stein factorization of $\overline q$.
Then, since $X$ and $\Chow(X)$ are projective, the morphism $q$ and $M$ are projective.
By construction each fiber of the morphism $q$ is an $S$-equivalent class.
This completes the proof.
\end{step}
\end{proof}

\begin{proof}[Proof of Theorem~\ref{thm:smooth}]
The proof is essentially the same as the proofs of \cite[Theorem~5.2]{DPS94} and \cite[Theorem~4.4]{SW04}.
Here we will only provide the outline of the proof, based on the proof of \cite[Theorem~4.4]{SW04}.
Note that $q$ is equidimensional with irreducible fibers.
Then, by arguing as in the proof of \cite[Lemma 4.12]{SW04}, we know that the following are satisfied:
\begin{enumerate}
 \item Every $q$-fiber with its reduced structure, denoted by $F$, is a smooth Fano manifold.
 \item By restricting the basic diagram $(S \xleftarrow{\pi} U \xrightarrow{e} X)$, we have a basic diagram $(S_{F} \xleftarrow{\pi_{F}} U_{F} \xrightarrow{e_{F}} F)$ such that $\pi_F$ is a smooth $\bP^1$-fibration, $F$ is chain-connected with respect to these families and the bundle $e_{F}^*(\sN_{F/X})$ is trivial on each $\pi_F$-fibers.
\end{enumerate}
Then, Lemma~\ref{lem:trivial} below shows that $\sN_{F/X}$ is trivial.
Hence the contraction $q$ is smooth by \cite[Lemma~4.13]{SW04}.
\end{proof}

\begin{lemma}[{\cite[Proposition~1.2]{AW01}}]\label{lem:trivial}
Let $F$ be a smooth Fano variety and $\sE$ be a vector bundle on $F$.
Assume that there exists a basic diagram $(S \xleftarrow{\pi} U \xrightarrow{e} F)$ such that  $\pi$ is a smooth $\bP^1$-fibration, $F$ is chain-connected with respect to this family and $\sE$ is trivial on each $\pi$-fiber.
Then $\sE$ is trivial.
\end{lemma}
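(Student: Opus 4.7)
The plan is to prove triviality of $\sE$ by a two-step descent argument: first descend $e^{*}\sE$ to $S$ along $\pi$, then propagate the resulting fiberwise identifications along chains of $e$-images of $\pi$-fibers, and finally invoke the Fano hypothesis to descend the trivialization to $F$.

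\emph{Step 1: descent along $\pi$.} Since $\sE$ is trivial on the image of each $\pi$-fiber, the pullback $e^{*}\sE$ is trivial on every $\pi$-fiber $\bP^{1}$. Cohomology and base change then imply that $\sF \coloneqq \pi_{*}(e^{*}\sE)$ is a rank-$r$ vector bundle on $S$ and that the evaluation morphism $\pi^{*}\sF \to e^{*}\sE$ is an isomorphism. Consequently, for any $s \in S$ and any two points $u_{1}, u_{2} \in \pi^{-1}(s)$, the fibers $\sE_{e(u_{1})}$ and $\sE_{e(u_{2})}$ are canonically identified through their common value $\sF_{s}$.

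\emph{Step 2: propagation along chains.} To link these pointwise identifications together, I would consider for each $k \geq 1$ the incidence scheme of $k$-step chains
\[
U^{(k)} \coloneqq U \times_{S} U \times_{F} U \times_{S} U \times_{F} \cdots
\]
($k$ copies of $U$), together with its two ``endpoint'' maps $\alpha, \omega \colon U^{(k)} \to F$ obtained by composing the outermost factors with $e$. Iterating the descent of Step~1 across each $\times_{S}$-factor yields a canonical isomorphism $\alpha^{*}\sE \cong \omega^{*}\sE$ on $U^{(k)}$. By the chain-connectedness hypothesis, for $k$ sufficiently large the map $(\alpha, \omega)\colon U^{(k)} \to F\times F$ is dominant, so for a fixed base point $x_{0} \in F$ one obtains, on a dominant cover $Z \to F$, a canonical isomorphism $\sE_{x_{0}} \otimes \cO_{Z} \cong \omega^{*}\sE|_{Z}$.

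\emph{The main obstacle} is descending this trivialization from $Z$ down to $F$, i.e., verifying that the identification of $\sE_{x}$ with $\sE_{x_{0}}$ is independent of the chain. Here the Fano hypothesis is essential: by the theorem of Campana and Koll\'ar--Miyaoka--Mori, $F$ is rationally chain connected and hence simply connected, which rules out monodromy along loops of chains. Alternatively and more algebraically, one may verify the cocycle condition for the descent datum on $Z \to F$ by using that any two chains with the same endpoints can themselves be linked through the (connected) family of chains, so the two resulting isomorphisms must agree on a dense set and therefore everywhere. Carrying out this descent produces the required global isomorphism $\sE \cong \sE_{x_{0}} \otimes \cO_{F}$.
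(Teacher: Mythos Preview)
Your Step~1 is correct and matches the opening of the paper's proof; the divergence and the gap come afterward.

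In Step~3 you correctly isolate the obstruction---independence of $\sE_{x_0}\cong\sE_x$ from the choice of chain---but neither proposed remedy settles it. Topological simple connectedness of $F$ controls representations of $\pi_1(F)$, whereas what you must kill is the image in $\mathrm{GL}(\sE_{x_0})$ of the scheme of chain-loops based at $x_0$; there is no homomorphism from $\pi_1(F)$ onto this group that would make your implication go through. Your alternative simply asserts that the fibers of $(\alpha,\omega)$ are connected, but that is exactly the point needing proof and is not automatic for such fiber products. What you do get for free is that each fixed-length fiber is proper, hence its image in the affine variety $\mathrm{GL}_r$ is \emph{finite}; but finite is not trivial, and passing to the union over all chain lengths destroys properness. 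The cocycle condition on $Z\times_F Z$ therefore remains unverified.

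The paper bypasses descent entirely by projectivizing. From $e^*\sE\simeq\pi^*\sF$ one obtains a new basic diagram $(\tS \xleftarrow{\tpi} \tU \xrightarrow{\te} \bP(\sE))$ with $\tS=\bP(\sF)$ and $\tU=\bP(e^*\sE)$, and hence a $\tS$-equivalence relation on $\bP(\sE)$. Each class $\tF\subset\bP(\sE)$ surjects onto $F$ by chain-connectedness and fiberwise triviality, while \cite[Chapter~IV, Proposition~3.13.3]{Kol96} forces the image of $N_1(\tF)\to N_1(\bP(\sE))$ to be one-dimensional; hence $\tF\to F$ is finite and $\dim\tF=\dim F$. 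The $\tS$-quotient of $\bP(\sE)$ then has dimension $\rank\sE-1$, and triviality of $\sE$ follows from \cite[Lemma~4.1]{NO07}. This numerical-class argument on $\bP(\sE)$ replaces the descent you were attempting.
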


\begin{proof}
This lemma follows from a similar argument as in the proof of \cite[Proposition~1.2]{AW01}.

Consider the bundle $e^* \sE$.
Then, since  $e^* \sE$ is trivial on each $\pi$-fiber, the push-forward $\sF \coloneqq \pi_*e^* \sE$ is a vector bundle on $S$ and we have an isomorphism $e^* \sE \simeq \pi^*\sF$.
Thus we have the following commutative diagram by taking projectivizations:
\[
\xymatrix{
\tS \coloneqq \bP(\sF) \ar[d] & \tU \coloneqq \bP(e^* \sE) \ar[d] \ar[r]^-{\te} \ar[l]_-{\tpi} & \bP(\sE) \ar[d] \\
 S & U \ar[r]^-{e} \ar[l]_-{\pi} & F.
}
\]
Then, by considering $(\tS \xleftarrow{\tpi} \tU \xrightarrow{\te} \bP(\sE))$ as a basic diagram on $\bP(\sE)$, we have the $\tS$-equivalent relation on $\bP(\sE)$.

Consider an $\tS$-equivalent class $\tF$.
Then, by the assumption on $\sE$ and the fact that $F$ is chain-connected with respect to $S$,  we see that the map $\tF \to F$ is surjective.
On the other hand, by \cite[Chapter IV, Proposition~3.13.3]{Kol96}, the image of the map $N_1(\tF) \to N_1(\bP(\sE))$ is a one-dimensional vector space spanned by the class of $\tpi$-fibers.
Thus the map $\tF \to F$ is finite and surjective, and hence $\dim \tF =\dim F$.
Therefore the image $Q$ of the $\tS$-quotient map $\bP(\sE) \dto Q$  has dimension $\rank E -1$. Now the assertion follows from \cite[Lemma~4.1]{NO07}.
\end{proof}

\begin{corollary}[Two projective bundles]\label{cor:face}
Let $U$, $S_1$ and $S_2$ be smooth projective varieties and $p_i \colon U \to S_i$ be smooth $\bP^{r_i-1}$-fibrations ($i=1,2$).
Denote by $R_i$ the extremal ray of $p_i$.
Then $R_1$ and $R_2$ span a two dimensional extremal face in $\cNE(U)$.
Moreover its contraction is smooth and each fiber of the contraction is a Fano manifold with Picard number two.
\end{corollary}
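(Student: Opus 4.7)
The plan is to build the face contraction by applying Theorem~\ref{thm:quotient} to the basic diagram
\[
S_2 \xleftarrow{p_2} U \xrightarrow{p_1} S_1
\]
on $S_1$. Both $p_1$ and $p_2$ are smooth projective fibrations, hence equidimensional with irreducible fibers, so the theorem produces a projective morphism $\bar q \colon S_1 \to M$ with irreducible equidimensional fibers onto a normal projective variety $M$. Setting $\psi \coloneqq \bar q \circ p_1 \colon U \to M$, I would first verify that $\psi$ contracts $R_1$ (it factors through $p_1$) and $R_2$ (the $p_1$-image of any $p_2$-fiber lies in a single $S_2$-equivalence class of $S_1$, hence is collapsed by $\bar q$). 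Since $\psi$ has connected fibers and targets a normal projective variety, $K_\psi \coloneqq \ker \psi_* \cap \cNE(U)$ is automatically a face of $\cNE(U)$ containing $R_1$ and $R_2$. A class in $N^1(U)_\bR$ is a pullback from $M$ iff it is numerically trivial on every curve in a $\psi$-fiber, and since such fibers are swept out by $p_1$- and $p_2$-fibers, this amounts to vanishing on $R_1$ and $R_2$. Hence $\psi^* N^1(M)_\bR = R_1^\perp \cap R_2^\perp$, which forces $\rho(U) - \rho(M) = 2$ and so $K_\psi = \bR_{\geq 0} R_1 + \bR_{\geq 0} R_2$ is the desired two-dimensional extremal face.

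The main difficulty is the smoothness of $\psi$, which reduces to that of $\bar q$ since $p_1$ is smooth. The obstruction to a direct application of Theorem~\ref{thm:smooth} is that $p_2$ is a $\bP^{r_2-1}$-fibration rather than a $\bP^1$-fibration; the remedy is to replace $p_2$-fibers by the lines they contain. Let $G_2 \to S_2$ be the relative Grassmannian of lines in the $p_2$-fibers, let $\mathcal U_2 \to G_2$ be the universal $\bP^1$-bundle, and let $e_2 \colon \mathcal U_2 \to U$ be the evaluation map, a smooth $\bP^{r_2-2}$-fibration. The basic diagram $(G_2 \xleftarrow{\tilde p_2} \mathcal U_2 \xrightarrow{p_1 \circ e_2} S_1)$ generates the same equivalence relation on $S_1$ as the original one---any pair of points of $S_1$ whose $p_1$-preimages meet a common $p_2$-fiber $\bP^{r_2-1}$ already meet a common line inside it---so its quotient is still $\bar q$, and now $\tilde p_2$ is a smooth $\bP^1$-fibration. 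The required $\tilde p_2$-nefness of $(p_1 \circ e_2)^* T_{S_1}$ reduces to checking that $(p_1^* T_{S_1})|_\ell$ is nef on each line $\ell$ in a $\bP^{r_2-1}$-fiber of $p_2$; this follows because $T_U|_\ell$ is an extension of a trivial bundle (restriction of $p_2^* T_{S_2}$) by the globally generated $T_{\bP^{r_2-1}}|_\ell$, hence nef, and $(p_1^* T_{S_1})|_\ell$ is a quotient via $dp_1$. Theorem~\ref{thm:smooth} then gives the smoothness of $\bar q$, whence of $\psi$.

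Finally, for any $\psi$-fiber $F$, every irreducible curve $C \subset F$ has class in $K_\psi = \bR_{\geq 0} R_1 + \bR_{\geq 0} R_2$, on which $-K_U$ is strictly positive; thus $-K_F = -K_U|_F$ is ample by Kleiman's criterion and $F$ is Fano. The Picard number equality $\rho(F) = 2$ then follows from $\rho(U) - \rho(M) = 2$ above together with the relative Picard sequence for the smooth fibration $\psi$. The central obstacle in the whole argument is the smoothness step, and specifically the verification of the tangent-bundle nefness condition after the reduction from $\bP^{r_2-1}$ to $\bP^1$ via families of lines.
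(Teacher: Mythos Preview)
Your overall strategy coincides with the paper's: apply Theorem~\ref{thm:quotient} to obtain the quotient $\bar q\colon S_1\to M$, pass to the family of lines in the fibers of one of the $\bP^{r_i-1}$-fibrations so that Theorem~\ref{thm:smooth} becomes applicable, and verify the required nefness via the surjection $T_U\to p_1^*T_{S_1}$ together with the nefness of $T_U$ along lines in the $p_2$-fibers. The paper argues identically, only with the roles of $S_1$ and $S_2$ interchanged.

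The one genuine gap is in your Picard-rank computations. From ``$\psi$-fibers are swept out by $p_1$- and $p_2$-fibers'' you conclude that every curve contracted by $\psi$ lies numerically in the span of $R_1$ and $R_2$; but ``swept out'' is a set-theoretic statement and does not by itself control numerical classes of curves---a $\psi$-fiber could a priori contain curves whose classes in $N_1(U)$ are independent of $R_1$ and $R_2$. The paper closes exactly this gap by invoking \cite[Chapter~IV, Proposition~3.13.3]{Kol96}: since each $\bar q$-fiber is an $S_2$-equivalence class, the image of its $N_1$ in $N_1(S_1)$ is one-dimensional, giving $\rho(S_1/M)=1$ and hence $\rho(U/M)=2$. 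The same proposition is used once more to show that the $q_i$-fibers have Picard number one, whence $\rho(F)=2$ for every $\psi$-fiber $F$. Your alternative route to $\rho(F)=2$ via ``the relative Picard sequence for the smooth fibration $\psi$'' does not immediately yield $\rho(F)=\rho(U)-\rho(M)$ without excluding monodromy on $\Pic$ of the fibers, so Koll\'ar's result (or an equivalent argument) is genuinely needed here as well. Once you insert that citation at these two points, your proof matches the paper's.
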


\begin{proof}
By the assumption, we have the following diagram
\begin{equation}\label{diagram:twoPB}
\xymatrix{
& U \ar[ld]_-{p_1} \ar[rd]^-{p_2} & \\
S_1 & & S_2.
}
\end{equation}
Considering this diagram as a basic diagram on $S_2$ and applying Theorem~\ref{thm:quotient}, we have the quotient morphism $q_2\colon S_2 \to M$.
Then, by rigidity lemma (see for instance \cite[Lemma~1.15]{Deb01}), we have a morphism $q_1 \colon S_1 \to M$, which makes the following diagram commutative:
\begin{equation}
\xymatrix{
 & U \ar[ld]_-{p_1} \ar[rd]^-{p_2}& \\
 S_1\ar[rd]_-{q_1} & & S_2 \ar[ld]^-{q_2}\\
 &M.&
}
\end{equation}
By symmetry, the morphism $q_1\colon S_1 \to M$ is also the quotient map for $S_2$-equivalent relation on $S_1$ that is induced by the diagram \eqref{diagram:twoPB}.
Note that the relative Picard rank $\rho(S_i/M)$ is one by \cite[Chapter IV, Proposition~3.13.3]{Kol96}.
Thus the morphism $U \to M$ is a contraction of a two dimensional face in $\cNE (U)$, and hence we have the first assertion.
By considering the family of lines in $p_1$-fibers, we have the following diagram:
\[
 \xymatrix{
 \widetilde U \ar[d]_-{\widetilde p_1} \ar[r]^-{e_1} & U \ar[d]_-{p_1} \ar[r]^-{p_2} & S_2 \\
 \widetilde S_1 \ar[r] & S_1, &
}
\]
where $\widetilde p_1 \colon \widetilde U \to \widetilde S_1$ is the universal family of lines in the $p_1$-fibers and $e_1\colon \widetilde U \to U $ is the evaluation map for this family.
Then, by considering $( \widetilde S_1 \xleftarrow{\widetilde p_1} \widetilde U \xrightarrow{p_2 \circ e_1} S_2)$ as a basic diagram on $S_2$, we have the $\widetilde S_1$-equivalent relation on $S_2$, which coincides with the $S_1$-equivalent relation on $S_2$.
Thus the map $q_2$ is the $\widetilde S_1$-quotient morphism.

Since $p_2 $ is smooth, we have the surjection $T_{ U} \to p_2 ^*T_{S_2}$.
Since $T_{ U}$ is $p_1$-nef, the bundle $p_2 ^*T_{S_2}$ is also $p_1$-nef.
Therefore, the bundle $(p_2 \circ e_2)^* T_{S_2}$ is $\widetilde p_1$-nef.
Hence, by Theorem~\ref{thm:smooth}, the contraction $q_2$ is smooth.
By symmetry, $q_1$ is also smooth.

Each fiber $F_2$ of $q_2$ is a smooth projective variety, and it is an $\widetilde S_1$-equivalent class.
Thus the Picard number of $F_2$ is one by \cite[Chapter IV, Proposition~3.13.3]{Kol96}.
Thus, for each fiber $F$ of  $U \to M$, the Picard number $\rho (F)$ is two.
This completes the proof.
\end{proof}

\section{Building blocks of simple $K$-equivalent maps}\label{sect:BB}
In this section, we completes the proof of Theorem~\ref{thm:simple}.
Let $\chi \colon X_1 \dto X_2$ be a simple $K$-equivalent map in codimension $r$ and consider the resolution of indeterminacy as in \eqref{diagram:simple}.
Then $g_i$ are $\bP^{r-1}$-bundles by Corollary~\ref{cor:twoPB}.
Thus, by applying Corollary~\ref{cor:face}, we have smooth extremal contractions $h_i \colon Y_i \to M$ with the following commutative diagram:
\[
\xymatrix{
 & &E \ar@{^{(}->}[d] \ar[lldd]_-{g_1} \ar[rrdd]^-{g_2} & & \\
 & & \tX \ar[ld]_-{f_1} \ar[rd]^-{f_2} & & \\
Y_1 \ar@{^{(}->}[r] \ar[rrdd]_-{h_1} & X_1 \ar@{-->}[rr]^{\chi} & & X_2 & Y_2 \ar@{_{(}->}[l] \ar[lldd]^-{h_2} \\
 & & & & \\
 & & M. & &
}
\]
We will denote by $\psi$ the composite $h_1 \circ g_1=  h_2 \circ g_2$.
Then, for each $m \in M$, the fiber $\psi^{-1}(m)$ is a Fano manifold with Picard number two whose extremal contractions are $\bP^{r-1}$-bundles:
\[
\xymatrix{
&\psi^{-1}(m) \ar[ld]_{g_1|_{\psi^{-1}(m)}} \ar[rd]^{g_2|_{\psi^{-1}(m)}}&\\
h_1^{-1}(m) & & h_2^{-1}(m).
}
\]
Note that each projective bundle structure is given by $\bP(\sC_{Y_i/X_i}|_{h_i^{-1}(m)})$.

The following lemma asserts that the canonical bundle of $X_i$ is trivial on each $h_i$-fiber, and hence the situation is very similar to the case of flops.

\begin{lemma}\label{lem:K=0}
For a point $m\in M$, we have
$K_{X_i}|_{h_i^{-1}(m)}=0$.
\end{lemma}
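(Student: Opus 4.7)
The plan is to exploit the $K$-equivalence identity $f_1^{*}K_{X_1}=f_2^{*}K_{X_2}$ together with the fact, established in Section~\ref{sect:twoPB}, that each $\psi$-fiber is a Fano manifold of Picard number two whose extremal rays are generated by fibers of $g_1$ and of $g_2$. Restricting the identity down to a single such fiber will force each $K_{X_i}|_{F_i}$ to be numerically, and hence linearly, trivial.

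First, I would restrict the $K$-equivalence identity from $\tX$ to the exceptional divisor $E$. Since $f_i|_{E}=g_i$, this yields
\[
g_1^{*}(K_{X_1}|_{Y_1})\sim g_2^{*}(K_{X_2}|_{Y_2})
\]
in $\Pic(E)$. Now fix $m\in M$ and set $F_i\coloneqq h_i^{-1}(m)\subset Y_i$; then $\psi^{-1}(m)=g_i^{-1}(F_i)$. Restricting the displayed identity to $\psi^{-1}(m)$ gives
\[
(g_1|_{\psi^{-1}(m)})^{*}(K_{X_1}|_{F_1})\sim (g_2|_{\psi^{-1}(m)})^{*}(K_{X_2}|_{F_2}).
\]

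The key step is then a numerical argument on $\psi^{-1}(m)$. Being a pullback along $g_1|_{\psi^{-1}(m)}$, the left-hand divisor has degree zero on every fiber of $g_1|_{\psi^{-1}(m)}$; by the displayed equality, it also has degree zero on every fiber of $g_2|_{\psi^{-1}(m)}$. As recorded just before the lemma, $\psi^{-1}(m)$ is a Fano manifold of Picard number two whose $N_1$ is spanned by the classes of these two rulings (Corollary~\ref{cor:face}), so the common divisor is numerically trivial. Fanoness upgrades numerical triviality to linear triviality, and because $g_i|_{\psi^{-1}(m)}\colon \psi^{-1}(m)\to F_i$ is a surjective $\bP^{r-1}$-bundle the pullback $\Pic(F_i)\to \Pic(\psi^{-1}(m))$ is injective, yielding $K_{X_i}|_{F_i}\sim 0$, as claimed.

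The only structural input requiring care is that on the $\psi$-fiber $\psi^{-1}(m)$ the two extremal contractions really are $g_1|_{\psi^{-1}(m)}$ and $g_2|_{\psi^{-1}(m)}$, so that their fiber classes span $N_1(\psi^{-1}(m))$; but this is exactly the output of Corollary~\ref{cor:face} applied to the two projective bundle structures on $E$. Beyond this, the argument is a short restriction-and-span computation, and I do not foresee a serious obstacle.
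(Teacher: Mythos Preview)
Your proof is correct and follows essentially the same approach as the paper: both restrict the $K$-equivalence identity $f_1^{*}K_{X_1}=f_2^{*}K_{X_2}$ to the fiber over $m$ and use the Picard number information coming from Corollary~\ref{cor:face}. The paper's version is marginally more direct, working on $h_1^{-1}(m)$ itself (Picard number one) and testing $K_{X_1}$ against a single curve---the $g_1$-pushforward of a $g_2$-fiber---rather than arguing on $\psi^{-1}(m)$ (Picard number two) and then descending via injectivity of pullback.
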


\begin{proof}
By symmetry, we may assume $i=1$.
Since the fiber $h_1^{-1}(m)$ is a Fano manifold with Picard number one, it is enough to check that $K_{X_1}$ is trivial on one curve in $h_1^{-1}(m)$.
Take a curve $C_2 \subset \psi^{-1}(m)$ in a $g_2|_{\psi^{-1}(m)}$-fiber and consider the push-forward $C_1 \coloneqq (g_{1}|_{\psi^{-1}(m)})_*(C_2)$.
Then
\[
K_{X_1} \cdot C_1 = g_1^* K_{X_1}\cdot C_2 = g_2^* K_{X_2} \cdot C_2 =0.
\]
Thus the assertion follows.
\end{proof}

The following completes the proof of Theorem~\ref{thm:simple}:

\begin{proposition}
The pair $({h_i^{-1}(m)},\sC_{Y_i/X_i}|_{h_i^{-1}(m)})$ is a simple Mukai pair.
\end{proposition}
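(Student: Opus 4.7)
The plan is to check the three conditions in Definition~\ref{def:mukai}(1)(2) for the pair $(F_i,\sE_i) \coloneqq (h_i^{-1}(m), \sC_{Y_i/X_i}|_{h_i^{-1}(m)})$: namely, that $F_i$ is a Fano manifold of Picard number one, that $\sE_i$ is an ample vector bundle of rank $r$ with $c_1(F_i)=c_1(\sE_i)$, and that $\bP(\sE_i)$ carries a second $\bP^{r-1}$-bundle structure. By symmetry it suffices to treat $i=1$; set $F \coloneqq F_1$ and $\sE \coloneqq \sE_1$.

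First I would extract the geometry of $F$ from Corollary~\ref{cor:face}. Applied to $E$ with its two $\bP^{r-1}$-bundle structures $g_1,g_2$, that corollary says $h_1$ is a smooth contraction, $\psi$-fibers are Fano of Picard number two, and—read from the proof—$h_1$-fibers are chain-connected $\widetilde S_1$-equivalence classes in the sense of Kollár, hence have Picard number one. So $F$ is a smooth projective variety with $\rho(F)=1$.

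Next I would establish the Chern class equality. Proposition~\ref{prop:canonicalbundle}(2) gives $c_1(\sC_{Y_1/X_1})=-K_{Y_1}+K_{X_1}|_{Y_1}$; restricting to $F$ and invoking Lemma~\ref{lem:K=0} kills the second term, while smoothness of $h_1$ identifies $K_{Y_1}|_F$ with $K_F$. This yields $c_1(\sE)=-K_F=c_1(F)$.

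The main step—and the only one that is not purely formal—is ampleness of $\sE$. The projectivization $\bP(\sE)$ is precisely the fiber $\psi^{-1}(m)$, since $g_1^{-1}(F)=(h_1\circ g_1)^{-1}(m)$, and under this identification the tautological line bundle of $\bP(\sE)$ is $\cO_E(1)|_{\psi^{-1}(m)}$ by Proposition~\ref{prop:canonicalbundle}(1). The fiber $\psi^{-1}(m)$ is a Fano manifold of Picard number two whose two extremal rays are generated by the lines in $g_i|_{\psi^{-1}(m)}$-fibers ($i=1,2$). Because $\cO_E(1)$ is the relative $\cO(1)$ for both $g_1$ and $g_2$ (again Proposition~\ref{prop:canonicalbundle}(1)), the restriction $\cO_E(1)|_{\psi^{-1}(m)}$ has positive intersection with the generator of each extremal ray, and hence with every curve; by Kleiman's criterion it is ample. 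Ampleness of the tautological bundle on $\bP(\sE)$ is equivalent to ampleness of $\sE$, so we are done with this step.

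Finally, since $\sE$ is ample, $c_1(\sE)$ intersects any curve positively; combined with $\rho(F)=1$ this yields that $-K_F=c_1(\sE)$ is ample, so $F$ is Fano. The required second $\bP^{r-1}$-bundle structure on $\bP(\sE)=\psi^{-1}(m)$ is supplied tautologically by $g_2|_{\psi^{-1}(m)}\colon \psi^{-1}(m)\to h_2^{-1}(m)$. All clauses of the definition of a simple Mukai pair are therefore verified.
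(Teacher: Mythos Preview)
Your proof is correct and follows essentially the same route as the paper's, which simply cites Proposition~\ref{prop:canonicalbundle} for ampleness and Proposition~\ref{prop:canonicalbundle} together with Lemma~\ref{lem:K=0} for the Chern class identity, treating the remaining conditions as already established in the preceding discussion. Your Kleiman-criterion argument for ampleness of $\cO_E(1)|_{\psi^{-1}(m)}$ is exactly the content the paper leaves implicit when it invokes Proposition~\ref{prop:canonicalbundle}(1).
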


\begin{proof}
It remains to check that $\sC_{Y_i/X_i}|_{h_i^{-1}(m)}$ is ample and $c_1(\sC_{Y_i/X_i}|_{h_i^{-1}(m)}) = c_1(h_i^{-1}(m))$.
The first assertion follows from Proposition~\ref{prop:canonicalbundle} and the second assertion follows from Lemma~\ref{lem:K=0} and  Proposition~\ref{prop:canonicalbundle}.
\end{proof}

\section{Construction of simple $K$-equivalence}\label{sect:const}
By Theorem~\ref{thm:simple}, simple $K$-equivalent maps are related to simple Mukai pairs: such a map can be seen as a family of simpler maps induced from simple Mukai pairs.
In this section, we discuss the inverse construction following \cite{Muk84,Nam03}, and explain how we can construct a simple $K$-equivalent map from a simple Mukai pair (or a family of simple Mukai pairs).

Let $X$ be a projective manifold and $Y \subset X$  a smooth subvariety of codimension at least two that satisfies the following conditions:
\begin{enumerate}
 \item $Y$ admits a smooth extremal contraction $h \colon Y \to M$.
 \item For each $h$-fiber $F$, the pair $(F,\sC_{Y/X}|_{F})$ is a simple Mukai pair.
\end{enumerate}

Denote by $\tX$ the blow-up of $X$ along $Y$, $E $ the exceptional divisor,  $g \colon E \to Y$ the natural projection and $\psi$ the composite $h \circ g$.
Note that $E$ is isomorphic to $ \bP(\sC_{Y/X})$ and the bundle $\cO_{\tX}(-E)|_E$ gives the relative tautological divisor of the projective bundle $ \bP(\sC_{Y/X})$.

\begin{lemma}
Let the notation be as above.
Then $E$ admits another $\bP^{r-1}$-bundle structure $g^{+}\colon E \to  Y^{+}$, with the following commutative diagram:
\[
 \xymatrix{
 & E \ar[ld]_{g} \ar[rd]^{g^+} & \\
 Y \ar[rd]_{h} & & Y^+ \ar[ld]^{h^+} \\
 & M. &
}
\]
Moreover, $\cO_{\tX}(-E)|_{E}$ gives a relative tautological divisor of $g^{+}$.
\end{lemma}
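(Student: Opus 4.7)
The plan is to construct the second ruling $g^+$ by globalizing, over $M$, the second $\bP^{r-1}$-bundle structure that exists on each $\psi$-fiber. Concretely, for $m\in M$ the fiber $\psi^{-1}(m)$ equals $\bP(\sC_{Y/X}|_{h^{-1}(m)})$, which by hypothesis is the projectivization of a simple Mukai pair, hence a roof of $\bP^{r-1}$-bundles (Definition~\ref{def:mukai}). Each such roof carries a second $\bP^{r-1}$-bundle structure $g^+_m$, and I want to assemble these into a global smooth morphism $g^+\colon E\to Y^+$ by applying Theorems~\ref{thm:quotient} and~\ref{thm:smooth} to an appropriate family of rational curves on $E$.

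First I set $\xi := \cO_{\tX}(-E)|_E$, the relative tautological divisor of $g$. By Proposition~\ref{prop:roof}, $\xi|_{\psi^{-1}(m)}$ restricts to $\cO(1)$ on every $\bP^{r-1}$-fiber of both rulings of the roof $\psi^{-1}(m)$; in particular $\xi$ is $\psi$-ample. Next, let $\mathcal{S}$ be the irreducible component of the relative Chow scheme of $E/M$ whose general member parameterizes a line inside a $g^+_m$-fiber (equivalently, a rational curve of $\xi$-degree one which is not contracted by $g$), and let $\mathcal{U}\to\mathcal{S}$ be the universal family, with evaluation $e\colon\mathcal{U}\to E$ and projection $\pi\colon\mathcal{U}\to\mathcal{S}$. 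I then verify that $(\mathcal{S}\xleftarrow{\pi}\mathcal{U}\xrightarrow{e} E)$ is a basic diagram meeting the hypotheses of Theorems~\ref{thm:quotient} and~\ref{thm:smooth}: $\pi$ is a smooth $\bP^1$-fibration, and the $e$-fiber over $x\in E$ is the $\bP^{r-2}$ of lines in the $g^+_{\psi(x)}$-fiber through $x$, so $e$ is equidimensional with irreducible fibers; the $\pi$-nefness of $e^*T_E$ follows because on each parameterized line $\ell$ the normal bundle $N_{\ell/E}$ is a direct sum of copies of $\cO$ and $\cO(1)$, using that $\ell$ lies in a $\bP^{r-1}$ that is itself a smooth fiber inside the smooth fiber $\psi^{-1}(m)$. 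Applying the two theorems yields a smooth morphism $g^+\colon E\to Y^+$ whose fibers are the $\mathcal{S}$-equivalence classes, hence the $g^+_m$-ruling fibers $\simeq\bP^{r-1}$. Since $\xi$ restricts to $\cO(1)$ on each such fiber, $g^+$ is the projective bundle $\bP_{Y^+}(g^+_*\cO_E(\xi))$ with relative tautological divisor $\xi$, which simultaneously proves the last assertion. Finally, each $g^+$-fiber lies in a single $\psi$-fiber, so the rigidity lemma produces $h^+\colon Y^+\to M$ with $h^+\circ g^+=\psi=h\circ g$, completing the diagram.

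The main obstacle will be the middle step: isolating the correct irreducible component $\mathcal{S}$ of the relative Chow scheme and checking globally, rather than merely fiberwise over $M$, that $\pi$ and $e$ have the properties required by Section~\ref{sect:twoPB}. Intuitively this follows from the fact that the second ruling $g^+_m$ varies holomorphically in $m\in M$, but rigorously it requires a flatness/deformation argument for the family of minimal rational curves on the fibers of $\psi$; alternatively one could attempt a direct construction of $\sE^+$-type data on $M$ from a suitable direct image of $\xi$, in the spirit of \cite{Muk84}. Once this global-family input is in place, the remainder of the argument is a routine combination of the results of Section~\ref{sect:twoPB} with the standard fact that a smooth $\bP^{r-1}$-fibration admitting a fiberwise degree-one line bundle is a projective bundle.
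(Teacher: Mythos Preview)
Your approach differs substantially from the paper's, and the obstacle you flag is real. The paper avoids your ``middle step'' entirely by using the relative cone/contraction theorem rather than the rational-curve machinery of Section~\ref{sect:twoPB}. Concretely: since $E\to Y$ is a $\bP^{r-1}$-bundle and $h\colon Y\to M$ is an extremal contraction, one has $\rho(E/M)=2$; since each $\psi$-fiber is a Fano manifold (a roof), $-K_E$ is $\psi$-ample. The relative cone theorem then produces the second extremal contraction $g^+\colon E\to Y^+$ over $M$ directly, with no need to assemble it from fiberwise data. The paper then checks that $g^+$ restricted to each $\psi$-fiber recovers (via Stein factorization) the second $\bP^{r-1}$-bundle of that roof, so $g^+$ is equidimensional with general fiber $\bP^{r-1}$, and a single invocation of Fujita's lemma \cite[Lemma~2.12]{Fuj87} upgrades this to a global $\bP^{r-1}$-bundle with relative tautological class $\cO_{\tX}(-E)|_E$.

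Your route---building the basic diagram $(\mathcal S\leftarrow\mathcal U\to E)$ and applying Theorems~\ref{thm:quotient} and~\ref{thm:smooth}---is in principle workable, but the step you isolate is not routine: you must show that the lines of the second ruling on the various $\psi$-fibers sweep out a \emph{single} irreducible component of the relative Chow scheme with the required equidimensionality of $e$. This is essentially equivalent to knowing that the second extremal ray of $\NE(E/M)$ is well-defined and spanned by a single unsplit family, which is exactly what the cone theorem hands you for free. So the paper's argument is both shorter and sidesteps the deformation/flatness issue you would otherwise have to address.
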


\begin{proof}
By our assumption, each fiber of the morphism  $\psi \colon E \to M$ is a Fano manifold with Picard number two which admits two $\bP^{r-1}$-bundle structures.
Thus $-K_{E}$ is $\psi$-ample.
Note that $\rho(E/M) =2$.
Therefore, by \cite[Theorem~3.25]{KM98} or \cite[Theorems 3-2-1, 4-2-1]{KMM87}, there exists the other extremal contraction $g^{+} \colon E \to Y^{+}$ with the following commutative diagram:
\[
\xymatrix{
 & E \ar[ld]_{g} \ar[rd]^{g^+} & \\
 Y \ar[rd]_{h} & & Y^+ \ar[ld]^{h^+} \\
 &M. &
}
\]
Take a point $m \in M$.
By \cite[Proposition~1.3]{Wis91b}, $g^{+}|_{\psi^{-1}(m)}$ is not finite, and hence the Stein factorization of $g^{+}|_{\psi^{-1}(m)}$ gives the other projective bundle structure.
In particular, $g^{+}$ is equidimensional.
Moreover, if $m \in M$ is general, then the morphism $g^{+}|_{\psi^{-1}(m)}$ gives the other projective bundle structure.
Thus general $g^{+}$-fiber is a projective space $\bP^{r-1}$.
Moreover, Proposition~\ref{prop:roof} shows that $\cO(-E)|_E$ restricts to $\cO(1)$ on general fibers $\bP^{r-1}$.
Therefore, by \cite[Lemma~2.12]{Fuj87}, $g^+$ is a $\bP^{r-1}$-bundle and $\cO(-E)|_E$ gives a relative tautological bundle of this projective bundle structure.
\end{proof}

\begin{proposition}[Construction of simple $K$-equivalent map]\label{prop:construction}
Let the notation be as above.
Then the blow-up $\tX$ admits another morphism $f^+ \colon \tX \to X^{+}$ onto a smooth complex manifold $X^{+}$ (may not be projective) with the following conditions:
\begin{enumerate}
 \item $X^+$ contains $Y^+$ as a closed subvariety.
 \item The morphism $f^{+} \colon \tX \to X^{+}$ restricts to $g^+$ on E.
 \item $f^+$ is the smooth blow-up along $Y^+ \subset X^+$.
 \item $f^* K_X = (f^+)^*K_{X_+}$.
\end{enumerate}
\end{proposition}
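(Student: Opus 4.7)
The plan is to realize $X^+$ by contracting $E \subset \tX$ analytically along the second projective bundle structure $g^+ \colon E \to Y^+$, using the classical Nakano--Fujiki criterion for blowing down $\bP^{r-1}$-bundles. Recall that criterion: if $E$ is a smooth divisor in a complex manifold $\tX$ that admits a $\bP^{r-1}$-bundle structure $g^+ \colon E \to Y^+$ with $\cO_{\tX}(E)|_E$ restricting to $\cO_{\bP^{r-1}}(-1)$ on every $g^+$-fiber, then there exist a smooth complex manifold $X^+$ containing $Y^+$ as a closed submanifold of codimension $r$, together with a holomorphic map $f^+ \colon \tX \to X^+$ which agrees with $g^+$ on $E$ and realizes $\tX$ as the smooth blow-up of $X^+$ along $Y^+$.

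The first step is to verify this normal bundle hypothesis. By the lemma immediately preceding the proposition, $\cO_{\tX}(-E)|_E$ is a relative tautological divisor of $g^+$, so $\cO_{\tX}(E)|_E \cong \cO_{g^+}(-1)$; in particular it restricts to $\cO_{\bP^{r-1}}(-1)$ on every $g^+$-fiber, exactly as required. Applying the criterion then produces $X^+$, the embedding $Y^+ \hookrightarrow X^+$, and $f^+$ fulfilling items (1)--(3). For item (4), the two blow-up formulas, one for $f$ and one for $f^+$, give
\[
K_{\tX} = f^* K_X + (r-1)E = (f^+)^* K_{X^+} + (r-1)E,
\]
and subtracting yields $f^* K_X = (f^+)^* K_{X^+}$.

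The only non-routine input is the invocation of the analytic contraction theorem; once it is applied, items (1)--(4) are essentially formal. The main subtlety, explicitly acknowledged in the statement, is that $X^+$ need not be projective (or even algebraic) in this generality, which is why the proposition only asserts that $X^+$ is a complex manifold: without further global positivity on $\sC_{Y/X}$ the contraction lives only in the complex analytic category.
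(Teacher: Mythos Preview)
Your proposal is correct and matches the paper's own proof essentially verbatim: the paper also derives (1)--(3) by invoking the Nakano--Fujiki contraction criterion \cite{Nak70,FN71}, and obtains (4) from the two blow-up formulas for $K_{\tX}$ (what the paper calls ``adjunction''). Your write-up is in fact more explicit than the paper's two-line proof, spelling out precisely which hypothesis of the criterion is supplied by the preceding lemma.
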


\begin{proof}
The first three conditions follow from \cite{Nak70,FN71}.
The last condition follows from adjunction.
\end{proof}

As is well-known, $X^+$ can be non-projective, and hence the map $\chi \colon X \dto X^+$ is, in general, not a map between \emph{projective} varieties.
The following lemma gives a sufficient condition for the projectivity of $X^+$ (cf.\ \cite[Proposition~1.3]{LLW10}).

\begin{proposition}\label{prop:flop}
In Proposition~\ref{prop:construction}, assume moreover that $X$ admits a birational contraction $\varphi \colon X \to Z$ to a projective variety $Z$ that satisfies the following conditions:
\begin{enumerate}
 \item The exceptional locus of $\varphi$ is $Y$.
 \item The Stein factorization of $\varphi |_Y$ gives the contraction $h \colon Y \to M$.
\end{enumerate}
Then the contraction $\varphi$ is a contraction of $K_X$-trivial ray, $X^+$ is projective and the map $X \dto X^+$ is the flop of $\varphi$.
\end{proposition}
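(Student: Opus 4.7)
My plan is to establish in order: (i) $\varphi$ contracts a $K_X$-trivial extremal ray; (ii) there is an induced proper birational morphism $\varphi^+ \colon X^+ \to Z$; (iii) $X^+$ is projective. The flop property then follows immediately from the $K$-equivalence $f^*K_X = (f^+)^*K_{X^+}$ already proved in Proposition~\ref{prop:construction}(4).

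For (i), I would first show $K_X|_F = 0$ on every $h$-fiber $F$. Adjunction along the smooth morphism $h$ gives $K_Y|_F = K_F$, and combining with Proposition~\ref{prop:canonicalbundle}(2) and the Mukai pair identity $c_1(\sC_{Y/X}|_F) = -K_F$ yields the vanishing. Since $F$ has Picard number one by simpleness of the Mukai pair, and since the connected components of the $\varphi$-fibers are exactly the $h$-fibers by the Stein factorization hypothesis, every $\varphi$-contracted curve is numerically proportional to a single class $[C] \subset F$ with $K_X \cdot C = 0$. Hence $\varphi$ is the contraction of a single $K_X$-trivial extremal ray.

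For (ii), I would show $\varphi \circ f \colon \tX \to Z$ factors through $f^+$ by checking it is constant on every $f^+$-fiber. The only non-point fibers of $f^+$ are the $g^+$-fibers $L^+ = (g^+)^{-1}(y^+)$, and from $h \circ g = h^+ \circ g^+ = \psi$ we get $g(L^+) \subset h^{-1}(h^+(y^+)) = F_m$ for a single $h$-fiber $F_m$; since $\varphi|_Y$ factors through $h$ by hypothesis, the image $\varphi(g(L^+))$ is a single point. The rigidity lemma then produces $\varphi^+ \colon X^+ \to Z$ with $\varphi^+ \circ f^+ = \varphi \circ f$. Properness of $\varphi^+$ is standard (from properness of $f^+$ and $\varphi \circ f$, and surjectivity of $f^+$), and $\varphi^+$ is an isomorphism away from $Y^+$ since $\chi$ is; hence $\varphi^+$ is a proper birational morphism with exceptional locus exactly $Y^+$.

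For (iii), I would exhibit a $\varphi^+$-ample line bundle on $X^+$; together with the pullback $(\varphi^+)^*H_Z$ of an ample divisor on $Z$, this yields an ample line bundle on $X^+$ for a large enough coefficient. Fix $H$ ample on $X$ and set $a \coloneqq H \cdot g(L^+)$, where $L^+$ is a line in any $g^+$-fiber; this is well-defined by numerical equivalence and strictly positive, since no $g^+$-fiber can lie inside a $g$-fiber (the two $\bP^{r-1}$-bundle structures on the roof $\psi^{-1}(m)$ are distinct). Because $(f^*H + aE)\cdot L^+ = 0$ and $\Pic(\tX) = (f^+)^*\Pic(X^+) \oplus \bZ E$, the class $f^*H + aE$ descends to a unique $H^+ \in \Pic(X^+)$ with $(f^+)^*H^+ = f^*H + aE$. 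A parallel intersection with $L$ a line in a $g$-fiber gives $H^+ \cdot f^+(L) = (f^*H + aE)\cdot L = 0 - a = -a < 0$; since $f^+(L)$ lies in the single $h^+$-fiber $F_m^+$ with $m = h(g(L))$, and $F_m^+$ is a Fano manifold of Picard number one by the symmetry of the simple Mukai pair structure on the flopped side, $-H^+$ is ample on every exceptional fiber of $\varphi^+$. Hence $-H^+$ is $\varphi^+$-ample, completing the proof. The main obstacle is this last fiberwise ampleness check, which relies crucially on the Mukai pair data inherited by the flopped fibers $F_m^+$.
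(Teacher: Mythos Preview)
Your argument is correct and reaches the conclusion by a genuinely different route from the paper. The paper proceeds via log MMP: it first manufactures an effective divisor $D$ with $D\cdot R_h<0$ so that $(X,\varepsilon D)$ is klt and $\varphi$ becomes a $(K_X+\varepsilon D)$-negative extremal contraction; then, working over $Z$, it observes that $\rho(\tX/Z)=2$ and that $-K_{\tX}$ is relatively ample, so the relative cone theorem produces a second extremal contraction of $\tX$ over $Z$, which is identified with $f^+$. Projectivity of $X^+$ then comes for free from the contraction theorem, and one never leaves the projective category.

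Your approach is more hands-on: you compute $K_X|_F=0$ directly by adjunction and the Mukai condition, factor $\varphi\circ f$ through $f^+$ by rigidity to obtain $\varphi^+$, and then write down an explicit class $H^+$ on $X^+$ with $-H^+$ relatively ample. This makes the geometry of the flop transparent and avoids invoking the cone/contraction machinery. Two small points are worth tightening. First, to conclude $-H^+$ is ample on every positive-dimensional $\varphi^+$-fiber you are implicitly using that these fibers are exactly the $h^+$-fibers $F_m^+$; this follows from $\varphi^+\circ f^+=\varphi\circ f$ together with the identification of the positive-dimensional $\varphi$-fibers with $h$-fibers, but deserves a sentence. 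Second, and more substantively, the step you flag as the ``main obstacle'' (the fiberwise intersection check) is routine; the genuinely delicate passage is from ``$-H^+$ is ample on every fiber'' to ``$-H^+$ is $\varphi^+$-ample'', since at this stage $X^+$ is only a smooth complex manifold (equivalently a smooth Moishezon algebraic space). Openness of relative ampleness for proper morphisms handles this, but it is the point where the paper's MMP approach buys simplicity by never leaving the projective world.
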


\begin{proof}
By considering the push-forward via the inclusion map $Y \to X$, we have the half line $R_h \subset \cNE(X)$ corresponding to the extremal ray of $h\colon Y \to M$.

Arguing as in the proof of \cite[Chapter III, Theorem1.6]{Kol96}, we have an irreducible divisor $D \subset X$ such that $D \cdot R_h <0$.
Thus, for sufficiently small $\varepsilon > 0$, the pair $(X , \varepsilon D)$ is Kawamata log terminal, and $- K_X - \varepsilon D$ is $\varphi$-ample.
Thus, by \cite[Lemma~3-2-5]{KMM87}, the half line $R_h$ is actually an $(K_X + \varepsilon D)$-negative extremal ray of $\cNE(X)$ and the contraction $\varphi$ is associated to $R_h$.

The relative Picard number $\rho(\tX / Z)$ is two by \cite[Lemma~3-2-5]{KMM87} again, and, since $-K_{\tX}$ is $\psi$-ample, we have the other contraction of $\tX$ over $Z$ by the cone theorem.
This contraction is nothing but the morphism $f^+$.
Thus $X^+$ is projective, and the map $\chi \colon X \dto X^+$ is the flop.
\end{proof}

Finally, we construct a \emph{local model} of simple $K$-equivalence from a simple Mukai pair, or a family of simple Mukai pairs (see \cite[Section~1]{Nam03}).
\begin{proposition}
 Let $h \colon Y \to M$ be a smooth extremal contraction between smooth projective varieties $Y$ and $M$.
 Assume that there is a vector bundle $\sE$ on $Y$ such that, for each $h$-fiber $F$, the pair $(F,\sE|_{F})$ is a simple Mukai pair.
Then there exists a smooth projective variety $X$ that contains $Y$ as in the assumption of Proposition~\ref{prop:flop}.
\end{proposition}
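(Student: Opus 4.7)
Set $X := \bP_Y(\sE \oplus \cO_Y)$ with the natural projection $\pi \colon X \to Y$; since $Y$ is smooth projective and $\sE \oplus \cO_Y$ is locally free, $X$ is smooth projective. Embed $Y \hookrightarrow X$ as the section $s$ corresponding to the quotient $\sE \oplus \cO_Y \twoheadrightarrow \cO_Y$. Using the general formula that a section of $\bP(\sF)$ given by a quotient $\sF \twoheadrightarrow \sL$ has conormal bundle $(\ker) \otimes \sL^{\vee}$, I would compute $\sC_{Y/X} = \sE$. In particular $(F, \sC_{Y/X}|_F) = (F, \sE|_F)$ is a simple Mukai pair for every $h$-fibre $F$, so the hypotheses of Proposition~\ref{prop:construction} are satisfied once we produce the birational contraction.

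Put $\psi := h \circ \pi \colon X \to M$. The central technical step is to check that the relative tautological line bundle $\cO_X(1)$ is $\psi$-semi-ample and $\psi$-big, and that its fibrewise contraction contracts exactly the section $Y$. On a $\psi$-fibre $\psi^{-1}(m) = \bP(\sE|_F \oplus \cO_F)$, the divisor $\cO(1)$ restricts trivially to the zero section $F_0 = Y \cap \psi^{-1}(m)$ but is ample on the divisor at infinity $\bP(\sE|_F)$ (because $\sE|_F$ is ample by the Mukai-pair hypothesis) and on every $\pi$-fibre $\bP^r$. This is precisely the classical projective-cone picture: for $k \gg 0$ the complete linear system $|\cO(k)|$ on $\bP(\sE|_F \oplus \cO_F)$ defines a birational morphism to the projective cone $\overline{C}(F, \sE|_F)$ over $F$ that contracts $F_0$ to the vertex and is an isomorphism elsewhere. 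Uniformity of $k$ over $m \in M$, together with cohomology-and-base-change and the quasi-compactness of $M$, promotes this to $\psi$-global generation of $\cO_X(k)$ for some fixed $k \gg 0$.

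Setting
\[
 Z := \Proj_M \Bigl( \bigoplus_{k \geq 0} \psi_* \cO_X(k) \Bigr),
\]
which is projective over $M$ and hence projective, the relative linear system induces a morphism $\varphi \colon X \to Z$ over $M$. By the fibrewise description $\varphi$ is birational, its exceptional locus is exactly $Y$, and each $h$-fibre is contracted to a distinct point of $Z$; consequently the Stein factorization of $\varphi|_Y$ coincides with $h \colon Y \to M$ followed by the inclusion $M \hookrightarrow Z$. Combined with the conormal bundle computation, this verifies every hypothesis of Proposition~\ref{prop:flop}, completing the construction. The main obstacle is the uniform fibrewise semi-ampleness of $\cO_X(1)$ in the family over $M$, which ultimately rests on the projective-cone description that is available because each $\sE|_F$ is ample.
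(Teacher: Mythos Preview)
Your construction $X=\bP_Y(\sE\oplus\cO_Y)$ with the section coming from $\sE\oplus\cO_Y\twoheadrightarrow\cO_Y$, and the conormal computation $\sC_{Y/X}\simeq\sE$, are exactly what the paper does. The only divergence is in how the contraction $\varphi\colon X\to Z$ over $M$ is produced.

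The paper observes that each $\psi$-fibre $\bP(\sE|_F\oplus\cO_F)$ is weak Fano (because $-K=(r+1)\cO(1)$ and $\sE|_F\oplus\cO_F$ is nef with positive top Segre class) and then applies the \emph{relative basepoint-free theorem} to $-K_X$ over $M$ in one stroke. Since $-K_X$ and $(r+1)\cO_X(1)$ differ only by a class pulled back from $M$, your line bundle and the paper's define the same relative contraction; you are just arguing its semi-ampleness by hand. Your projective-cone picture is correct, and the passage from fibrewise to relative global generation can indeed be made elementary: the section of $\cO_X(1)$ cutting out $D_\infty=\bP(\sE)$ generates $\cO_X(k)$ off $D_\infty$, while on $D_\infty$ one uses that $\sE$ is $h$-ample, so $h^*h_*S^k\sE\to S^k\sE$ is surjective for $k\gg 0$, giving relative sections of $\cO_X(k)$ that generate along $D_\infty$. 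This bypasses the base-change subtlety (note that $R^1h_*S^j\sE$ need not vanish for small $j$, so the naive ``cohomology-and-base-change'' phrasing in your sketch is not quite enough as stated). The payoff of your route is that it avoids invoking MMP machinery; the payoff of the paper's route is brevity --- one line once weak-Fano-ness of the fibres is noted.
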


\begin{proof}
Set $X \coloneqq \bP(\sE \oplus \cO)$.
Then the surjection $ \sE \oplus \cO \to \cO$ gives a section $Y'$ of the projection $\pi \colon X \to Y$.
We will denote by $h' \colon Y' \to M$ the composite $Y' \simeq Y \to M$, and by $\theta$ the composite $h \circ \pi$.

By construction, $\sC_{Y'/X} \simeq \sE$ via the identification $Y' \simeq Y$.
Thus it remains to show that $X$ admits a contraction $\varphi \colon X \to Z$ as in Proposition~\ref{prop:flop}.

Each fiber of $\theta$ is isomorphic to $\bP(\sE|_F \oplus \cO_F)$, where $F$ is a fiber of $h$.
Then, by using the definition of Mukai pairs, it is easy to check that $\bP(\sE|_F \oplus \cO_F)$ is a weak Fano variety, i.e.\ $-K_{\bP(\sE|_F \oplus \cO_F)}$ is nef and big.
Thus $-K_X$ is $\theta$-nef and $\theta$-big.
Thus, by the relative basepoint-free theorem \cite[Theorem~3.24]{KM98} or \cite[Theorem 3-1-1]{KMM87}, $-K_X$ defines a contraction $\varphi \colon  X \to Z$ over $M$:
\[
\xymatrix{
X \ar[r]^-{\varphi} \ar[d]_-{\pi}  \ar[rd]^-{\theta}& Z \ar[d] \\
Y \ar[r]_{h}& M
}
\]

Then the exceptional locus of $\varphi$ is $Y'$, and $\varphi |_{Y'}$ determines the contraction $Y' \to M$. This completes the proof.
\end{proof}

\section{Examples and classification}\label{sect:ex}
In this section, we firstly present examples of roofs of $\bP^{r-1}$-bundles and simple $K$-equivalent maps.
Secondly, we review the classification results of roofs.
Finally, by using the classification results and the structure theorem, we prove Theorem~\ref{thm:dim8}.

\subsection{Examples of roofs and simple $K$-equivalence}

\subsubsection{Homogeneous cases}
A \emph{rational homogeneous variety} is, by definition, a homogeneous variety of the form $G/P$, where $G$ is a semi-simple algebraic group and $P$ is a parabolic subgroup.
Such a variety is uniquely determined from its combinatoric data, called its \emph{marked Dynkin diagram}:
Let $G$ be a semi-simple group $G$ and $B$ a Borel subgroup of $G$.
Then we can attach a Dynkin diagram $D$ of a reduced root system by considering its Lie algebra.
Then there is a one-to-one correspondence between the set of parabolic subgroups contained in $B$ and the set of subsets $I \subset D$ (see for instance \cite[2.2]{MOSWW15}).
Our notation is compatible with \cite[2.2]{MOSWW15}.
Thus the correspondence is inclusion-reversing.
We will call the pair $(D,I)$ the \emph{marked Dynkin diagram} for the homogeneous variety $G/P$.

Fix a semi-simple group $G$, and denote by $D$ its Dynkin diagram.
Then, by the above correspondence, we have the parabolic subgroup $P(I)$ for each subset $I \subset D$.
It is known that the Picard number of a rational homogeneous manifold $G/P(I)$ is $\# I$.
Also, by construction, if $I \subset J$, then we have the contraction $G/P(J) \to G/P(I) $, whose fibers are the rational homogeneous manifold corresponding to the marked Dynkin diagram $(D \setminus I, J \setminus I)$ (here the Dynkin diagram $D\setminus I$ is obtained by removing the nodes in $I$ and the edges touching the nodes in $I$).
In particular, a subset of $D$ with one element gives a maximal parabolic subgroup, and hence it gives a rational homogeneous variety with Picard number one.
For such varieties, its dimension and index are determined from the combinatoric data (see e.g.\ \cite[Corollary~2.4]{Sno93}).
Thus, by combining with the Kobayashi-Ochiai theorem \cite{KO73}, we can see that  a rational homogeneous variety $G/P$ is isomorphic to a projective space $\bP^{r-1}$, if and only if its marked Dynkin diagram is the following two types:
\begin{enumerate}
\item
$\dPA$

\item
$\dPC$
\end{enumerate}
Here the marking is specified by the black circle.

Let $W$ be a rational homogeneous manifold $W = G/P$ and $(D,I)$ be its marked Dynkin diagram.
Assume that $W$ is a roof of $\bP^{r-1}$-bundles.
Then $I$ consists of two elements $i$ and $j$.
Since it admits two $\bP^{r-1}$-bundle structures, the marked Dynkin diagrams $(D \setminus \{i\}, \{j\})$ and $(D \setminus \{j\}, \{i\})$ are one of the two marked Dynkin diagrams as above.
Conversely, if we are given a marked Dynkin diagram $(D,\{i,j\})$ as above, then the corresponding rational homogeneous variety is a roof of $\bP^{r-1}$-bundles.
Thus, by checking for each cases, we have the following seven examples of homogeneous roofs, and hence seven examples of simple $K$-equivalent maps.

\begin{example}[Type $A_{r-1}\times A_{r-1}$]
Set $W \coloneqq \bP^{r-1} \times \bP^{r-1}$.
Then $W$ is a roof of $\bP^{r-1}$-bundles.
Note that the variety $W$ is  a homogeneous variety whose automorphism group is a semi-simple group of type $A_{r-1} \times A_{r-1}$, and it corresponds to the following marked Dynkin diagram.
\[
\dAA
\]

We will call this variety a \emph{roof of type $A_{r-1} \times A_{r-1}$}.
A simple $K$-equivalent map is called \emph{type $A_{r-1} \times A_{r-1}$}, if each $\psi$-fiber in the diagram \eqref{diagram:str} is isomorphic to the roof of type $A_{r-1} \times A_{r-1}$.
Note that simple $K$-equivalent maps of type $A_{r-1} \times A_{r-1}$ are nothing but so-called standard flops (see Remark~\ref{rem:fibration}).
\end{example}

\begin{example}[Type $A_{r}^M$]\label{ex:Mukai}
Consider the flag variety $W \coloneqq \Fl (1,r;r+1)$, which parametrizes the flags of subspaces $(V_1\subset V_r)$ with $\dim V_i=i$ in a vector space $\bC^{r+1}$.
Then $W$ is a roof of $\bP^{r-1}$-bundles.

The roof $W$ admits two natural projections $\pr_1 \colon \Fl (1,r;r+1) \to \Gr (1,r+1)$ and $\pr_2 \colon \Fl (1,r;r+1) \to \Gr (r,r+1)$.
The fibers of these projections are isomorphic to $\bP^{r-1}$.

$W$ is a homogeneous variety whose automorphism group is a semi-simple group of type $A_r$.
The marked Dynkin diagram of $W$ is the following.
\[
\dAM
\]
We will call this variety a \emph{roof of type $A_{r}^M$}.
A simple $K$-equivalent map is called \emph{type $A_{r}^{M}$}, if each $\psi$-fiber in the diagram \eqref{diagram:str} is isomorphic to the roof of type $A_{r}^{M}$.
Note that this flag variety $W$ is isomorphic to the projectivized tangent bundle of a projective space $\bP(T_{\bP^r})$, and hence simple $K$-equivalent maps of type $A_{r}^M$ are so-called Mukai flops (see Remark~\ref{rem:fibration}).
\end{example}

\begin{example}[Type $A_{2r-2}^G$ ($r \geq 3$)]\label{ex:AG}
Consider the flag variety $W \coloneqq \Fl (r-1,r;2r-1)$.
Then, similarly to Example~\ref{ex:Mukai}, $W$ is a roof of $\bP^{r-1}$-bundles.
The images of projections are the Grassmannian varieties $\Gr(r-1;2r-1)$ and $\Gr(r;2r-1)$ respectively.
$W$ is a rational homogeneous variety whose marked Dynkin diagram is the following.
\[
\dAG
\]
We call this variety a \emph{roof of type $A_{2r-2}^G$}.
A simple $K$-equivalent map is called \emph{type $A_{2r-2}^G$}, if each $\psi$-fiber in the diagram \eqref{diagram:str} is isomorphic to the roof of type $A_{2r-2}^G$.
\end{example}

\begin{example}[Type $C_{\frac{3r}{2}-1}$ ($r$ even)]\label{ex:C}
Let $r \geq 2$ be an even integer and
fix a symplectic bilinear form on a vector space $\bC^{3r-2}$.
Consider the symplectic flag variety $\SFl (r-1,r;3r-2)$, which parametrizes the flags of isotropic subspaces $(V_{r-1} \subset V_r) $ with $\dim V_i=i$.
Then $W$ is a roof of $\bP^{r-1}$-bundles.
The images of projections are the symplectic Grassmannians $\SG(r-1;3r-2)$ and $\SG(r;3r-2)$ respectively.
$W$ is a rational homogeneous variety whose marked Dynkin diagram is the following.
\[
\dC
\]
We will call this variety a \emph{roof of type $C_{\frac{3r}{2}-1}$}.
A simple $K$-equivalent map is called \emph{type $C_{\frac{3r}{2}-1}$}, if each $\psi$-fiber in the diagram \eqref{diagram:str} is isomorphic to the roof of type $C_{\frac{3r}{2}-1}$.
Note that Abuaf's flop in \cite{Seg16} is a $K$-equivalent map of type $C_2$.
\end{example}

\begin{example}[Type $D_{r}$ ($r\geq4$)]\label{ex:D}
Fix a non-degenerate quadratic form on a vector space $\bC^{2r}$ and consider the orthogonal Grassmann variety $\OG (r-1;2r)$, which parametrizes the $r-1$-dimensional isotropic subspaces.
Then $W$ is a rational homogeneous variety whose marked Dynkin diagram is the following.
\[
\dD
\]
Thus $W$ gives a roof of $\bP^{r-1}$-bundles.
The images of projections are the orthogonal Grassmannians $\OG^+(r;2r)$ and $\OG^-(r;2r)$, which are the connected components of the orthogonal Grassmannian $\OG(r;2r)$.
We will call this variety a \emph{roof of type $D_{r}$}.
A simple $K$-equivalent map is called \emph{type $D_r$}, if each $\psi$-fiber in the diagram \eqref{diagram:str} is isomorphic to the roof of type $D_{r}$.
\end{example}

\begin{example}[Type $F_4$ ($r=3$)]\label{ex:F}
Consider a rational homogeneous variety $W$ whose marked Dynkin diagram is the following.
\[
\dF
\]
Then $W$ is a roof of $\bP^{2}$-bundles.
We will call this variety a \emph{roof of type $F_4$}.
A simple $K$-equivalent map is called \emph{type $F_4$}, if each $\psi$-fiber in the diagram \eqref{diagram:str} is isomorphic to the roof of type $F_4$.
\end{example}

\begin{example}[Type $G_2$ ($r=2$)]\label{ex:G}
Consider a rational homogeneous variety $W$ whose marked Dynkin diagram is the following:
\[
\dG
\]
Then $W$ gives a roof of $\bP^{1}$-bundles.
We will call this variety a \emph{roof of type $G_2$}.
A simple $K$-equivalent map is called \emph{type $G_2$}, if each $\psi$-fiber in the diagram \eqref{diagram:str} is isomorphic to the roof of type $G_2$.
Note that the flop studied in \cite{Ued18} is of type $G_2$.
\end{example}

\begin{remark}\label{rem:fibration}
The definition of standard flops (type $A_{r-1}\times A_{r-1}$) and Mukai flops (type $A_{r}^M$) are slightly different from the definition in \cite{Li18};
In our definition, we do \emph{not} assume that the morphisms $h_i$ are projective bundles, i.e. it comes from the projectivization of a vector bundle.
In fact, there are simple $K$-equivalent maps of these types, where the morphisms $h_i$ are not projective bundles (see the following example).
\end{remark}

\begin{example}
Consider a smooth $\bP^{r-1}$-fibration $h \colon Y \to M$, which is not a $\bP^{r-1}$-bundle (note that such an example exists already in dimension $3$ over a surface $M$, see e.g.\ \cite{BOSS96}).
Then by letting $\sE \coloneqq \cO^{\oplus r}$ or $T_{h}$ (the relative tangent bundle) and applying the construction in Section~\ref{sect:const}, we obtain a simple $K$-equivalent map of type $A_{r-1}\times A_{r-1}$ or $A_r^{M}$.
For this example, the flopping locus is isomorphic to $h \colon Y \to M $, which is not a $\bP^{r-1}$-bundle.
\end{example}

\subsection{Non-homogeneous roof}
Here we will provide one example of roof, which is not homogeneous, based on \cite{Ott88,Ott90,Kan16Ext}.

Let $\bQ^5$ be a smooth $5$-dimensional hyperquadric.
Then the Chow group $A_i(\bQ^5)$ is isomorphic to the group $\bZ$ for all $i \in \{0,1,2,3,4,5\}$.
We will identify an element  $A_i(\bQ^5)$ with an integer.
\begin{definition}A vector bundle $\sG$ of rank $3$ on $\bQ^5$ is called an \emph{Ottaviani bundle} if it is stable and $(c_1(\sG),c_2(\sG),c_3(\sG))=(2,2,2)$.
\end{definition}
Such a bundle is constructed and studied in \cite{Ott88}.
Herein, we include one description of the projectivization of this bundle.
See \cite{Ott88,Ott90,Kan16Ext,Kan17Mukai} for other properties of this bundle and several characterizations.

In \cite[Section~2]{Kan16Ext}, it is proved that the projectivization $\bP(\sG)$ is a roof of $\bP^2$-bundles and $\bP(\sG)$ is isomorphic to the following manifold (cf.\ \cite{Ott90}):

\begin{example}[Type $G_2^\dagger$ ($r=3$)]\label{ex:G*}
Let $\bO$ be the Cayley octonions, and denote by $\blank \cdot \blank$ be its Cayley product.
Let $W$ be a closed submanifold of $\bP(\im \bO) \times \bP(\im \bO)$ defined as follows:
\[
\{ (x,y) \in \bP(\im \bO) \times \bP(\im \bO) \mid x\cdot x = x\cdot y = y\cdot y = 0 \}.
\]
Then, the image of each projection $\pr_i|_{W}$ is isomorphic to a smooth hyperquadric $\bQ^5$ in $\bP(\im \bO) \simeq \bP^6$.
Moreover the projection $\pr_i|_{W} \colon W \to \bQ^5$ is a $\bP^2$-bundle, and these define the structure of a roof on $W$.
Note that the automorphism group of $\bO$ is a semi-simple group of type $G_2$, and, by the construction, $W$ admits the action of a semi-simple group of type $G_2$.
We will call this variety $W$ a roof of type $G_2^\dagger$.
A simple $K$-equivalent map is said to be of type $G_2^\dagger$, if each $\psi$-fiber in the diagram \eqref{diagram:str} is isomorphic to the roof of type $G_2^\dagger$.
Note that this variety $W$ is not homogeneous (see e.g.\ \cite[Theorem~2.2]{Kan16Ext}).
\end{example}

\subsubsection{List of roofs}
Let $W$ be a roof of $\bP^{r-1}$-bundles.
Then we have the following diagram with two $\bP^{r-1}$-bundle structures:
\[
\xymatrix{
& W \ar[ld]_-{p_1} \ar[rd]^-{p_2} & \\
V_1 & & V_2.
}
\]
So far we have constructed eight examples of roofs $A_{r-1} \times A_{r-1}$, $A_r^M$, $A_{2r-2}^G$, $C_{\frac{3r}{2}-1}$, $D_r$, $F_4$, $G_2$ and $G_2^{\dagger}$.
The following is the list of these examples.
The second column lists the marked Dynkin diagrams for homogeneous roofs, and the last column lists the triples $(\dim V_i, r_{V_1},r_{V_2})$, where $r_{V_i}$ is the index of $V_i$:

\begin{table}[!ht]
\begin{tabular}{lcc}
Type & Marked Dynkin diagram & $(\dim V_i, r_{V_1},r_{V_2})$ \\
$A_{r-1}\times A_{r-1}$ & \dAA & $(r-1,r,r)$ \\
$A_r^M$ & \dAM & $(r,r+1,r+1)$ \\
$A_{2r-2}^G$ & \dAG & $(r(r-1),2r-1,2r-1)$ \\
$C_{\frac{3r}{2}-1}$ ($r$ even) & \dC & $(\frac{3r(r-1)}{2},2r,2r-1)$ \\
$D_r$ & $\begin{gathered} \dD \end{gathered}$ & $(\frac{r(r-1)}{2},2r-2,2r-2)$ \\
$F_4$ ($r=3$) & \dF & $(20,5,7)$ \\
$G_2$ ($r=2$) & \dG & $(5,3,5)$ \\
$G_2^\dagger$ ($r=3$) & & $(5,5,5)$ \\
\end{tabular}
\end{table}

\subsection{Classification results of roofs}

By combining classification results of roofs, we have the following:

\begin{theorem}\label{thm:class}
Let $W$ be a roof of $\bP^{r-1}$-bundes with dimension $n+r-1$.
Then $W$ is isomorphic to one of the above examples, if one of the following holds:
\begin{enumerate}
 \item \label{thm:class1} $r \geq n-2$.
 \item \label{thm:class2} $r = 2$.
 \item \label{thm:class3} $\dim W \leq 7$.
\end{enumerate}
More precisely, the following hold:
\begin{enumerate}
\item If \ref{thm:class1} holds, then $W$ is of type $A_{r-1} \times A_{r-1}$, $A_r^M$,  $C_{2}$, $D_4$  or $G_2^{\dagger}$.
\item If \ref{thm:class2} holds, then $W$ is of type $A_{1} \times A_{1}$, $A_2^M$,  $C_{2}$,  $G_2$.
\item If \ref{thm:class3} holds, then $W$ is of type $A_{r-1} \times A_{r-1}$ ($r \leq 3$), $A_r^M$ ($r \leq 3$),  $C_{2}$, $G_2$ or $G_2^{\dagger}$.
\end{enumerate}
\end{theorem}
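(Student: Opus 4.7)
The plan is to translate the problem into one about simple Mukai pairs via Proposition~\ref{prop:roof}, then invoke the cited classifications. By definition, a roof $W$ of $\bP^{r-1}$-bundles has the form $\bP_V(\sE)$ for a simple Mukai pair $(V,\sE)$ of rank $r$ on a Fano $n$-fold $V$ of Picard number one. By Proposition~\ref{prop:roof}, the simplicity condition (equivalently, existence of a second $\bP^{r-1}$-bundle structure on $W$) is equivalent to $W$ having index $r$. So in all three cases the task reduces to classifying simple Mukai pairs in the indicated numerical range and then reading off the resulting roofs.

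For case~\ref{thm:class1}, the assumption $r \geq n-2$ places us squarely in the range where Mukai pairs have been completely classified by the series of works cited in the introduction (Fujita, Peternell, Ye--Zhang, Wiśniewski, Peternell--Szurek--Wiśniewski, Novelli--Occhetta, Kanemitsu). I would go through this list of pairs $(V,\sE)$ with $V$ of Picard number one, and for each candidate test the simplicity condition by checking whether $\bP(\sE)$ carries a second $\bP^{r-1}$-bundle structure (for example via the index condition supplied by Proposition~\ref{prop:roof}). The pairs that survive should match exactly the roofs of types $A_{r-1}\times A_{r-1}$, $A_r^M$, $C_2$, $D_4$, and $G_2^{\dagger}$.

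For case~\ref{thm:class2}, with $r = 2$, a roof becomes a smooth Fano variety of Picard number two whose two extremal contractions are both $\bP^1$-bundles, and such varieties are classified by Muñoz--Occhetta--Solá Conde \cite{MOS14a} and Watanabe \cite{Wat14b}. Applying Proposition~\ref{prop:roof} to extract those of index two should isolate precisely the types $A_1\times A_1$, $A_2^M$, $C_2$, and $G_2$. For case~\ref{thm:class3}, the inequality $\dim W = n + r - 1 \leq 7$ forces $n + r \leq 8$: when $r = 2$ we invoke case~\ref{thm:class2}, while $r \geq 3$ yields $n \leq 5$, so $r \geq n-2$ and case~\ref{thm:class1} applies. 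It then remains to select which of the eight examples actually satisfy $\dim W \leq 7$ in order to produce the explicit list.

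The main obstacle I anticipate is case~\ref{thm:class1}: although the large-rank Mukai pair classification is available in the literature, it contains many entries, and certifying the existence or nonexistence of a second $\bP^{r-1}$-bundle structure on $\bP(\sE)$ for each entry is the substantive content of the argument. Ruling out the non-simple candidates may require concrete bundle-theoretic computations (or degeneration/rigidity arguments on the Kleiman--Mori cone of $\bP(\sE)$) alongside the equivalent characterizations collected in Proposition~\ref{prop:roof}.
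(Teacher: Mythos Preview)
Your proposal is correct and follows essentially the same approach as the paper: invoke the large-rank Mukai pair classification for case~\ref{thm:class1}, the Mu\~noz--Occhetta--Sol\'a~Conde and Watanabe classification for case~\ref{thm:class2}, and reduce case~\ref{thm:class3} to the previous two (your numerical reduction $n+r\leq 8 \Rightarrow r\geq 3$ implies $r\geq n-2$ is exactly the intended argument, which the paper leaves implicit).
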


\begin{proof}
In the first case, the assertion follows from the classification of Mukai pairs with large rank \cite{Fuj92,Pet90,Pet91,YZ90,Wis89b,PSW92b,NO07,Kan17Mukai,Kan18Mukai} (cf.\ \cite{Occ05}).
In the second case, the assertion follows from the classification of Fano manifolds with Picard rank two whose extremal contractions are $\bP^1$-bundles \cite{MOS14a,Wat14b}.
The last assertion is a consequence of \ref{thm:class1} and \ref{thm:class2}.
\end{proof}

As a corollary of the above classification and the structure theorem, we have the following:
\begin{corollary}\label{cor:class}
 Let $\chi \colon X_1 \dto X_2$ be a simple $K$-equivalent map in codimension $r$ as in \eqref{diagram:str}.
Then the following hold:
\begin{enumerate}
\item If $r \geq \dim Y_i - \dim M-2$, then $\chi$ is of type $A_{r-1} \times A_{r-1}$, $A_r^M$,  $C_{2}$, $D_4$  or $G_2^{\dagger}$.
\item If $r \leq 2$, then $\chi$ is of type $A_{1} \times A_{1}$, $A_2^M$, $C_{2}$ or  $G_2$.
\item If $\dim X_i \leq 8$, then $\chi$ is of type $A_{r-1} \times A_{r-1}$ ($r \leq 3$), $A_r^M$ ($r \leq 3$), $C_{2}$, $G_2$ or $G_2^{\dagger}$.
\end{enumerate}
\end{corollary}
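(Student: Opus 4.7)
The plan is to deduce Corollary~\ref{cor:class} (and hence Theorem~\ref{thm:dim8}) directly from the classification of roofs in Theorem~\ref{thm:class} by applying it fiberwise to the structure of $\chi$ furnished by Theorem~\ref{thm:simple}. Concretely, Theorem~\ref{thm:simple} embeds $\chi$ into the diagram \eqref{diagram:str} with smooth morphisms $h_i\colon Y_i \to M$ and $\psi\colon E \to M$, and asserts that every $\psi$-fiber $W_m \coloneqq \psi^{-1}(m)$ is a roof of $\bP^{r-1}$-bundles. Writing $n$ for the common fiber dimension of $h_1$ and $h_2$, one has $\dim W_m = n + (r-1)$ and $\dim X_i = n + \dim M + r$; note also that $r \geq 2$, since $\chi$ is not an isomorphism.

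In each of the three hypotheses I would translate the assumption on $\chi$ into the corresponding hypothesis on $W_m$ appearing in Theorem~\ref{thm:class}. Under (1), $r \geq \dim Y_i - \dim M - 2$ is exactly $r \geq n - 2$, so the first case of Theorem~\ref{thm:class} applies and constrains $W_m$ to the types $A_{r-1}\times A_{r-1}$, $A_r^M$, $C_2$, $D_4$, or $G_2^{\dagger}$. Under (2), $r \leq 2$ forces $r = 2$, matching the second case of Theorem~\ref{thm:class}. Under (3), the identity $\dim W_m = \dim X_i - \dim M - 1$ together with $\dim M \geq 0$ yields $\dim W_m \leq \dim X_i - 1 \leq 7$, so the third case of Theorem~\ref{thm:class} applies. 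In each case the list of allowed roof types is exactly the list appearing in the corresponding part of the corollary.

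It remains to verify that $\chi$ is of a \emph{single} one of the listed types, i.e.\ that the isomorphism class of $W_m$ is independent of $m$. This follows from a rigidity remark: the eight roofs are pairwise distinguished by discrete numerical invariants (for example the triple $(\dim V_i, r_{V_1}, r_{V_2})$ together with $r$, as recorded in the table of Section~\ref{sect:ex}), and such invariants are locally constant in a smooth proper family. After replacing $M$ by a connected component if necessary, every $W_m$ is therefore of the same type, and $\chi$ is a simple $K$-equivalent map of that type in the sense of Section~\ref{sect:ex}.

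Once Theorem~\ref{thm:simple} and Theorem~\ref{thm:class} are granted, the proof of the corollary is a short bookkeeping exercise on dimensions together with the rigidity remark above, and presents no further obstacle. The genuine difficulty lies upstream, in the structure theorem (Theorem~\ref{thm:simple}) that produces the family of roofs, and, more substantially, in the classification of roofs (Theorem~\ref{thm:class}) itself, which rests on the delicate classifications of Mukai pairs of large rank \cite{Fuj92,Pet90,Pet91,YZ90,Wis89b,PSW92b,NO07,Kan17Mukai,Kan18Mukai} and of Fano manifolds of Picard rank two whose extremal contractions are $\bP^1$-bundles \cite{MOS14a,Wat14b}.
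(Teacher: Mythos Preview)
Your proof is correct and follows the same route as the paper: the paper's own argument is the single sentence ``This follows from Theorem~\ref{thm:simple} and Theorem~\ref{thm:class},'' and you have simply spelled out the dimension bookkeeping that makes each hypothesis of the corollary match the corresponding hypothesis of Theorem~\ref{thm:class}. Your extra paragraph on constancy of the fiber type is a point the paper leaves implicit; it is a reasonable addition, though in practice once the varieties are taken irreducible the smoothness of $\psi$ over a connected $M$ already forces the discrete invariants of the $\psi$-fibers to be constant.
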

\begin{proof}
This follows from Theorem~\ref{thm:simple} and Theorem~\ref{thm:class}.
\end{proof}

\begin{remark}
 In Theorem~\ref{thm:class} and Corollary~\ref{cor:class}, we have shown that, in several cases, simple $K$-equivalent maps or roofs are one of the examples constructed above.
To the best of the author's knowledge, these are the all known examples of roofs, and hence of simple $K$-equivalence.
\end{remark}

\section{Symplectic varieties}\label{sect:symp}
Mukai flops (or simple $K$-equivalent maps of type $A_r^M$ in our terminology) are introduced by Mukai in the context of the geometry of symplectic varieties \cite{Muk84}.
I learned from Duo Li that the following question is raised by Daniel Huybrechts:

\begin{question}
If $\chi \colon X \dto X^+$ is a simple $K$-equivalent map between symplectic varieties $X$, then is $\chi$ a Mukai flop?
\end{question}

In his paper \cite[Theorem~1.7]{Li18}, Li obtained a positive answer for this question if the Picard rank of the center of the birational map is one.
The following theorem is obtained via the discussion with Duo Li, which answers positively the above question:

\begin{theorem}\label{thm:symp}
Let $X$ be a projective symplectic manifold of dimension $2n$, i.e.\ a smooth projective variety that admits a symplectic form $\omega \in H^0(\Omega_{X}^2)$, and $\chi \colon X \dto X^+$ a simple $K$-equivalent map.
Then $\chi$ is a Mukai flop, or equivalently a simple $K$-equivalent map of type $A_r^M$.
\end{theorem}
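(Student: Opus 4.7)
The plan is to apply the structure theorem (Theorem~\ref{thm:simple}) and use the symplectic form $\omega$ on $X$ to show that each simple Mukai pair $(F,\sE)\coloneqq (F_1,\sC_{Y_1/X}|_{F_1})$ attached to an $h_1$-fiber is isomorphic to $(\bP^r, T_{\bP^r})$, the Mukai pair of type $A_r^M$; by the structure theorem this will imply that $\chi$ is of type $A_r^M$, i.e.\ a Mukai flop.

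First, since each fiber $F$ of $h_1\colon Y_1\to M$ is a Fano manifold with Picard number one, we have $H^0(F,\Omega_F^2)=0$, hence $\omega|_F=0$; in other words $F$ is isotropic in $X$. The symplectic form induces an isomorphism $\phi\colon T_X\xrightarrow{\sim}\Omega_X$ by $v\mapsto\omega(v,\blank)$, and the isotropy of $F$ implies that $\phi$ maps the subbundle $T_F$ into the conormal bundle $\sC_{F/X}$, producing an injective subbundle map $T_F\hookrightarrow \sC_{F/X}$. Combining this with the conormal exact sequence
\[
0\to \sE\to \sC_{F/X}\to \sC_{F/Y_1}\to 0,
\]
in which $\sC_{F/Y_1}\simeq h_1^*\Omega_M|_F$ is a trivial vector bundle of rank $\dim M$, the composition $T_F\to \sC_{F/Y_1}$ corresponds to $\dim M$ elements of $H^0(F,\Omega_F)=0$ (Fano vanishing); hence $T_F$ lifts to an injective subbundle of $\sE$.

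Next I compare determinants: $\det T_F = -K_F = c_1(\sE)$ by the Mukai pair identity, so $\sE/T_F$ has trivial determinant. On the other hand, as a quotient of the ample bundle $\sE$ by a subbundle, $\sE/T_F$ is ample, and its restriction to any curve therefore has positive degree, contradicting triviality of the determinant unless $\rank(\sE/T_F)=0$. Hence $T_F\simeq \sE$, and in particular $\dim F=r$; Mori's theorem then identifies $F$ with $\bP^r$ and $T_F$ with $T_{\bP^r}$, so $(F,\sE)\simeq (\bP^r,T_{\bP^r})$ for every $h_1$-fiber. This identifies every $\psi$-fiber as a roof of type $A_r^M$, and by Theorem~\ref{thm:simple} exhibits $\chi$ as a Mukai flop. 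The step requiring most care is promoting the inclusion $T_F\hookrightarrow \sC_{F/X}$ into the smaller subbundle $\sE$; once this is achieved via the Fano vanishing $H^0(\Omega_F)=0$, the determinant/ampleness comparison together with Mori's theorem closes the argument cleanly.
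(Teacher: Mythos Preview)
Your argument is correct, and it takes a cleaner route than the paper's own proof. Both proofs begin identically: the isotropy of $F$ (from $H^{0}(F,\Omega_F^2)=0$) gives a subbundle inclusion $T_F\hookrightarrow \sC_{F/X}$, and the vanishing $H^0(F,\Omega_F)=0$ forces this to factor through $\sE=\sC_{Y/X}|_F$. From here the paper invokes Corollary~\ref{cor:class}, which rests on the full classification of Mukai pairs of large rank, to deduce that $F\simeq\bP^{\dim F}$ and $\sE\in\{\cO(1)^{\oplus r},\,T_F\}$, and then rules out the first option by $\Hom(T_{\bP^m},\cO(1))=0$. You instead observe directly that $\det(\sE/T_F)\simeq\cO_F$ while $\sE/T_F$, being a quotient bundle of an ample bundle, would be ample of positive rank; this forces $T_F\simeq\sE$, and Mori's theorem then gives $F\simeq\bP^r$. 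Your path trades a heavy classification result for a single appeal to Mori's theorem together with an elementary determinant comparison, and along the way you obtain the inequality $\dim F\le r$ for free (as a consequence of the subbundle inclusion) rather than having to establish it beforehand as the paper does.
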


\begin{proof}
We will use a similar notation as in Theorem~\ref{thm:simple}.
Then, by arguing as in the proof of \cite[Proposition~3.1]{Muk84}, we see that $r = \codim _X Y \geq \dim F = \dim Y - \dim M$ and $F$ is isotropic, where $F$ is a fiber of $h$.
Then, by Corollary~\ref{cor:class}, $\chi$ is of type $A_{r-1}\times A_{r-1}$ or $A_r^M$.

In any case, $F$ is isomorphic to a projective space $\bP^{\dim F}$.
Since $F$ is isotropic, the tangent bundle $T_{F}$ is a subbundle of $\sC_{F/X}$.
Note that  $\sC_{Y/X}|_{F}$ is isomorphic to $\cO_F(1)^{\oplus\dim F +1}$ or $T_F$, and there is the following exact sequence:
\[
0 \to \sC_{Y/X}|_{F} \to \sC_{F/X} \to \sC_{F/Y} \simeq \cO_{F}^{\oplus \dim Y -\dim M} \to 0.
\]
Since there are no non-trivial morphisms from $T_F$ to $\sC_{F/Y} \simeq \cO_{F}^{\oplus \dim Y -\dim M}$, the subbundle $T_{F} \subset \sC_{F/X}$ is contained in $\sC_{Y/X}|_{F}$.
Also there are no non-trivial morphisms from $T_F$ to $\cO_F(1)^{\oplus\dim F +1}$.
Thus $\sC_{Y/X}|_{F} \simeq T_F$.
This completes the proof.
\end{proof}

\bibliographystyle{amsalpha}
\bibliography{}
\end{document}